\documentclass[12pt]{amsart}
\usepackage[top=30truemm,bottom=30truemm,left=25truemm,right=25truemm]{geometry}
\usepackage{txfonts}
\usepackage{mathrsfs}

\usepackage{color}
\usepackage{bm}
\usepackage{amsfonts,amssymb}
\usepackage{dsfont}
\usepackage{amscd}
\usepackage{extarrows}
\usepackage{amsmath}
\usepackage{mathrsfs}
\usepackage{enumerate}
\usepackage{amscd}
\usepackage[all]{xy}
\usepackage[hyperfootnotes=true]{hyperref}

\usepackage{extarrows}
\theoremstyle{plain} 
\newtheorem{theorem}{\indent\bf Theorem}[section]

\newtheorem{conj}[theorem]{\indent\bf Conjecture}
\theoremstyle{definition} 

\newtheorem {thm}{Theorem}[section]
\newtheorem{cor}[thm]{Corollary}
\newtheorem{lem}[thm]{Lemma}
\newtheorem{prop}[thm]{Proposition}

\theoremstyle{definition}
\newtheorem{defn}{Definition}[section]

\theoremstyle{remark}
\newtheorem{rem}{Remark}[section]

\newcommand{\be}{\begin{equation}}
\newcommand{\ee}{\end{equation}}
\newcommand{\bea}{\begin{eqnarray}}
\newcommand{\eea}{\end{eqnarray}}
\newcommand{\ben}{\begin{eqnarray*}}
	\newcommand{\een}{\end{eqnarray*}}
\newcommand{\bt}{\begin{split}}
	\newcommand{\et}{\end{split}}
\newcommand{\bet}{\begin{equation}}

\begin{document}
\title{Invariance of plurigenera for K\"ahler families with nef canonical bundles}

\author[F. Deng]{Fusheng Deng}
\address{Fusheng Deng: \ School of Mathematical Sciences, University of Chinese Academy of Sciences\\ Beijing 100049, P. R. China}
\email{fshdeng@ucas.ac.cn}

\author[J. Ning]{Jiafu Ning}
\address{Jiafu Ning: \ Department of Mathematics, Central South University, Changsha, Hunan 410083, P. R. China.}
\email{jfning@csu.edu.cn}
\author[Z. Wang]{Zhiwei Wang}
\address{Zhiwei Wang: Laboratory of Mathematics and Complex Systems (Ministry of Education)\\ School of Mathematical Sciences\\ Beijing Normal University\\ Beijing 100875\\ P. R. China}
\email{zhiwei@bnu.edu.cn}
\author[X. Zhou]{Xiangyu Zhou}
\address{Xiangyu Zhou: Institute of Mathematics\\Academy of Mathematics and Systems Sciences\\and Hua Loo-Keng Key
	Laboratory of Mathematics\\Chinese Academy of
	Sciences\\Beijing\\100190\\P. R. China}
\address{School of
	Mathematical Sciences, University of Chinese Academy of Sciences,
	Beijing 100049, China}
\email{xyzhou@math.ac.cn}

\begin{abstract}

Let $p:X\rightarrow\Delta$ be a proper holomorphic submersion from a
K\"ahler manifold $X$. We prove that, if $K_{X_t}$ is nef for every
$t\in\Delta$, then $\dim_{\mathbb C}H^0(X_t,mK_{X_t})$ is independent of
$t$ for every integer $m\geq1$. This gives a partial answer to Siu's
conjecture on the invariance of plurigenera for K\"ahler families.
 
\end{abstract}

\thanks{
This research is supported by National Key R\&D Program of China (No. 2021YFA1002600). The authors are partially supported respectively by NSFC grants (11871451,  11801572, 12071035, 11688101).
The first author is  partially supported by the University of Chinese Academy of Sciences.
The third author is partially supported by Beijing Natural Science Foundation (1202012, Z190003).}

\maketitle
\tableofcontents
\section{Introduction}
The main purpose of this paper is to study Siu's conjecture on the
deformation invariance of plurigenera. We use the total-space K\"ahler
formulation stated in \cite[Conjecture 1.3]{CP20}, corresponding to
Siu's formulation for holomorphic families of compact K\"ahler manifolds
\cite[Conjecture 2.1]{Siu022}.
\begin{conj}\label{conj: Siu}
Let $p:X\rightarrow\Delta$ be a proper holomorphic submersion from a
K\"ahler manifold $X$. Then, for every integer $m\geq1$, the number
\[
 P_m(X_t):=\dim_{\mathbb C}H^0(X_t,mK_{X_t})
\]
is independent of $t\in\Delta$.
\end{conj}

The projective counterpart plays an important role in birational geometry.
Siu proved invariance for smooth projective families of manifolds of general
type in \cite{Siu98}, and Kawamata subsequently gave an algebraic proof in
\cite{Kaw99}. Siu proved the general smooth projective case in
\cite{Siu021}; P\u aun later gave a one-tower proof in \cite{Pau07}.
Takayama proved an extension to algebraic families over a smooth curve whose
fibers have canonical singularities \cite[Theorem 1.1]{Tak07}.

The study of the K\"ahler case goes back to Levine \cite{L83,L85}. The
1983 result assumes the existence of a nonzero pluricanonical section with
smooth divisor, while the 1985 result allows the associated cyclic cover to
have the mild singularities specified there.

Cao--P\u aun proved the following further extension criterion
\cite[Corollary 1.4]{CP20}. Given a nonzero section
$s\in H^0(X_0,mK_{X_0})$, let $\Sigma_s=\operatorname{div}(s)$ and set
\[
 \mathfrak I_s:=\lim_{\varepsilon\downarrow0}
 \mathcal I\!\left((1-\varepsilon)\frac{m-1}{m}\Sigma_s\right),
\]
where the limit denotes the stable value for $0<\varepsilon\ll1$. If
the zero set of $\mathfrak I_s$ is discrete, then $s$ extends
holomorphically to the total space. This contains Levine's criterion as
a special case.

The main result of the present paper is the following.
\begin{thm}[=Theorem \ref{thm: inv-Kah}]
Let $p:X\rightarrow\Delta$ be a proper holomorphic submersion from a
K\"ahler manifold $X$, and assume that $K_{X_t}$ is nef for every
$t\in\Delta$. Then, for every integer $m\geq1$, every $t_0\in\Delta$, and
every $s\in H^0(X_{t_0},mK_{X_{t_0}})$, there exists
$\widetilde s\in H^0(X,mK_X)$ whose restriction to $X_{t_0}$, under the
identification induced by the standard coordinate on $\Delta$, is $s$.
Consequently $P_m(X_t)$ is independent of $t\in\Delta$.
\end{thm}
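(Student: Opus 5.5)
We argue by extending sections with an Ohsawa--Takegoshi type theorem with optimal estimate, in the style of Siu and P\u aun. Since $K_{X_t}$ is nef on each fiber, but $X$ is only K\"ahler, global positivity must come from the K\"ahler form rather than from an ample line bundle. Restrict to a relatively compact disc and assume $t_0=0$. Let $\omega$ be a K\"ahler form on $X$; shrinking $\Delta$ if needed, $\omega$ is defined near the compact set $p^{-1}(\overline{\Delta_r})$. The key input is that nefness of $K_{X_t}$ for every $t$ lets us find, for each $\varepsilon>0$, a quasi-psh weight $\varphi_\varepsilon$ on $p^{-1}(\Delta_r)$ such that $i\Theta_{h_0}(K_{X/\Delta})+i\partial\bar\partial\varphi_\varepsilon\geq-\varepsilon\omega$, where $h_0$ is a smooth reference metric. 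On each fiber this is the definition of nefness. Relatively it requires a uniform choice over $t$, which we get from the fiberwise Demailly regularization together with a compactness and patching argument. We may instead need to produce only singular metrics that are "almost positive" on the fibers, and control their $t$-dependence via the Kähler class $[\omega]$.

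Next we run an $L^{2/m}$ extension scheme in the spirit of P\u aun/Berndtsson--P\u aun. Given $s\in H^0(X_0,mK_{X_0})$, we want to extend $s$ as a section of $mK_X=K_X+(m-1)K_{X/\Delta}$, identifying $K_X|_{X_0}\cong K_{X_0}$ via $dt$. We endow $(m-1)K_{X/\Delta}$ with the metric $h_\varepsilon^{(m-1)}\cdot|s|^{-2(m-1)/m}$-type weights built iteratively. Concretely, we combine $(m-1)\varphi_\varepsilon$ with the weight $\frac{m-1}{m}\log|s|^2$ (extended to the total space using a provisional local extension), so that the curvature is $\geq-(m-1)\varepsilon\omega$. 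The Ohsawa--Takegoshi theorem with negligible negativity absorbed by the twist $\varepsilon\,|t|^2$ type terms in the Kähler setting (the version valid for weakly pseudoconvex Kähler total spaces, e.g.\ Cao's or Zhou--Zhu's optimal $L^2$ extension) then gives $\widetilde s_\varepsilon\in H^0(p^{-1}(\Delta_r),mK_X)$ extending $s$. Its $L^2$ norm against these weights is bounded by $C\int_{X_0}|s|^{2/m}h_\varepsilon$, which is uniform in $\varepsilon$.

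To pass to a limit as $\varepsilon\to0$, the uniform weighted bound turns into a uniform $L^{2/m}$ bound on $\widetilde s_\varepsilon$ via H\"older. This works because $\varphi_\varepsilon$ can be normalized with $\sup\varphi_\varepsilon=0$, and $e^{-\varphi_\varepsilon}\geq1$ gives the inequality in the right direction. Montel's theorem then yields a subsequence converging to a holomorphic $\widetilde s$ with $\widetilde s|_{X_0}=s$. Upper semicontinuity gives $P_m(X_t)\leq P_m(X_0)$ near $0$, and surjectivity of restriction near every point gives the reverse inequality, so $P_m$ is locally constant and hence constant on $\Delta$.

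The main obstacle is the absence of an ample bundle. In P\u aun's proof one needs a fixed positive line bundle $A$ and the induction on $mK+A$, and then $A$ is removed via $\frac1k$ roots. Here the curvature defect $-\varepsilon\omega$ must be absorbed directly in the $L^2$ estimate. I would try an Ohsawa--Takegoshi version with estimate depending on $\varepsilon$ only through a factor tending to $1$. One way is to use a weight $\psi=\log|t|^2$ and allow a small loss $\varepsilon\omega$ compensated by the K\"ahler form in the twisted Bochner--Kodaira identity. Making the uniform-in-$t$ quasi-psh weights $\varphi_\varepsilon$ exist is the second delicate point. If a direct construction fails, I would replace it by a fiberwise argument. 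Nefness of $K_{X_t}$ gives, for each $t$, metrics with curvature $\geq-\varepsilon\omega|_{X_t}$, and by openness of this condition plus compactness these can be glued over a finite cover of $\overline{\Delta_r}$ using a partition of unity in $t$. The error introduced by the $t$-derivatives of the cutoff functions would be absorbed by adding $C|t|^2$.
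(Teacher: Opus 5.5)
There is a genuine gap, and it sits where the paper does its main work. You never produce a weight $\Phi$ on $(m-1)K_{X/\Delta}$ that is both semipositive on the total space and controlled on $X_0$, i.e.\ with $|s|^2e^{-\Phi}$ integrable on $X_0$ with a bound independent of the approximation parameter.

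\emph{The curvature defect cannot be absorbed as you propose.} Theorem \ref{thm: ext} needs $\sqrt{-1}\Theta_{L,h}\geq0$, and your compensations give no positivity along the fibers. The form $\sqrt{-1}\partial\bar\partial|t|^2$ vanishes on vertical vectors. No function near $X_0$ can have $\sqrt{-1}\partial\bar\partial$ positive along a compact fiber (look at a fiberwise maximum). And $\omega$ enters the Bochner--Kodaira--Nakano inequality only through $\Lambda_\omega$, not as curvature. So the limit $\varepsilon\to0$ should be taken in the weights, before extending.

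\emph{The uniformity claim is unproved, and your construction does not give it.} With $\sup\varphi_\varepsilon=0$ (needed to remove the weight on the left), the right-hand side $\int_{X_0}|s|^2e^{-(m-1)\varphi_\varepsilon}$ stays bounded only if $\varphi_\varepsilon|_{X_0}\geq V_{\theta_0}-C$ with $C$ independent of $\varepsilon$. That requires two things: fiberwise control by the envelope up to a uniform constant, and bounded oscillation in $t$ of the fiberwise suprema. Your gluing gives neither.
\begin{itemize}
\item Absorbing the mixed terms of $\sqrt{-1}\partial\bar\partial\tilde f$ and $\partial\chi_j\wedge\bar\partial(g_j-g_k)$ against a vertical margin of size $\varepsilon$ forces, by Cauchy--Schwarz, a coefficient of order $\varepsilon^{-1}$ in front of $|t|^2$. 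The oscillation in $t$ then blows up, and after normalization $\varphi_\varepsilon|_{X_0}$ may tend to $-\infty$.
\item Fiberwise Demailly regularizations of a chosen $\theta_0$-psh function converge to that function. It may be far more singular than $V_{\theta_0}$.
\item Inserting $\frac{m-1}{m}\log|s|^2$ through a ``provisional local extension'' is circular. Local extensions do not glue to a global weight, and a global extension is exactly what is to be proved.
\end{itemize}

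The paper gets the missing control from three specific inputs, not from nefness of $K_{X/\Delta}$ alone (its own remark stresses this).
\begin{itemize}
\item P\u aun's fiberwise twisted K\"ahler--Einstein potentials satisfy $\theta+\eta\omega+\sqrt{-1}\partial\bar\partial\phi_\eta>0$ on the total space, with no $|t|^2$ correction.
\item The Di Nezza--Guedj--Guenancia estimate gives $V_{\eta,t}-M\leq u_{\eta,t}\leq V_{\eta,t}$, with $M$ independent of $t$, $\eta$ and the possibly vanishing volume $\mathcal V_\eta$.
\item The identity $\mathcal V_\eta=e^{c_{\eta,t}}WJ_{\eta,t}$ bounds the oscillation of the fiberwise suprema.
\end{itemize}
This allows a single total-space normalization and a limit $F\in\operatorname{PSH}(p^{-1}(\Delta_R),\theta)$ whose restriction to $X_0$ has minimal singularities. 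Hence $|s|^2e^{-(m-1)F}\leq C|s|^{2/m}$, and Theorem \ref{thm: ext} applies with a genuinely semipositive metric; no $\varepsilon$-loss extension theorem is needed.

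A smaller omission: your extension lives only on $p^{-1}(\Delta_r)$. To get $\widetilde s\in H^0(X,mK_X)$ you still need, for example, coherence of $p_*(mK_X)$ and Cartan's theorem A, as in the paper.
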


The proof of above theorem relies on the following important  ingredients:
\begin{itemize}
\item P\u aun's total-space positivity for fiberwise twisted
K\"ahler--Einstein metrics \cite{Pa17}.
\item The uniform, volume-independent estimates for degenerate complex
Monge--Amp\`ere equations in families due to Di Nezza--Guedj--Guenancia
\cite{DNGG}.
\item The Ohsawa--Takegoshi type extension theorem for weakly pseudoconvex
K\"ahler manifolds and singular metrics, cf. \cite{ZZ17,ZZ20}, where Guan-Zhou's solution to Demailly's strong openness conjecture for multiplier ideal sheaves plays a key role \cite{GZ15}.
\end{itemize}
    
Let us explain the main strategy. For $\eta>0$, P\u aun's construction
produces a smooth potential $\phi_\eta$ such that
\[
 \theta+\eta\omega+\sqrt{-1}\partial\bar\partial\phi_\eta>0
\]
on the total space, where $\theta$ represents $c_1(K_{X/\Delta})$.
After subtracting the fiberwise supremum, the Di Nezza-Guedj-Guenancia estimate compares
$\phi_\eta$ with the extremal envelope in
$c_1(K_{X_t})+\eta[\omega_t]$ by a constant independent of $t$, $\eta$,
and the possibly collapsing class volume. Integrating the
Monge--Amp\`ere equation shows that the fiberwise suprema differ from one
another by a uniformly bounded amount. We can therefore subtract one
total-space constant and pass to a quasi-psh limit $F$. Its restriction to
$X_0$ has minimal singularities in $c_1(K_{X_0})$. For
$s\in H^0(X_0,mK_{X_0})$, the metric induced by $(m-1)F$ then satisfies
\[
 |s|^2e^{-(m-1)F}\leq C|s|^{2/m},
\]
up to a smooth factor. The right-hand side is integrable, so the
Ohsawa--Takegoshi theorem applies.

\subsection*{Acknowledgement}
We wish to express our deepest gratitude to Professor Mihai P\u aun for his beautiful lectures in  Zhou's seminar in Chinese Academy of Sciences, where he announced his proof of the Siu conjecture in the nef canonical bundle case. We had independently obtained our proof and sent to him our manuscript written before his lectures. Although we proposed a joint paper, Professor P\u{a}un, with kind generosity, encouraged us to upload it as soon as possible under our own names. We sincerely thank him for his generosity, encouragement, and valuable discussions for further work.

\section{Preparations}
In this section, we recall the definition and collect some known results  and prepare lemmas and propositions, related to   quasi-psh functions.
\
\begin{defn}Let $X$ be a complex manifold and $\varphi:X\rightarrow [-\infty,+\infty)$ be an upper semicontinuous function on $X$.  We say $\varphi$ is quasi-plurisubharmonic (quasi-psh for short) if locally, it can be written as a sum of smooth function and psh function. We say a quasi-psh function $\varphi$ is with analytic singularities \cite{Dem12}, if locally it can be  written as 
	\begin{align*}
		\varphi=c\log\!\left(\sum_j|f_j|^2\right)+g,
		\end{align*}
	where $c\in \mathbb R_+$ and $g$ is a smooth function. If  here $g$ is only continuous, then we say $\varphi$ is with weak analytic singularities \cite{CT15}.
\end{defn}
\begin{defn}

Let $X$ be a complex manifold and  $\varphi$ be a quasi-psh function on an open subset $\Omega\subset X$. The multiplier ideal sheaf $\mathcal I(\varphi)\subset \mathscr O_\Omega$ associated to $\varphi$ is the set of germs of holomorphic functions $f\in \mathscr O_{\Omega,x}$ such that $|f|^2e^{-\varphi}$ is integrable with respect to the Lebesgue measure in some local coordinates near $x$.
\end{defn}
It is trivial to see that if $\varphi_1<\varphi_2$ on $\Omega$, then it holds that $\mathcal I(\varphi_1)\subset \mathcal I(\varphi_2)$. 
\
\subsection{Direct image formula}
\begin{prop}[cf. {\cite[Proposition 5.8]{Dem12}}]\label{prop: direct image}
Let $\pi:\tilde X\rightarrow X$ be a modification of non singular complex manifolds (i.e. a proper generically $1:1$ holomorphic map), and let $\varphi$ be a quasi-psh function on $X$. Then 
$$\pi_*(\mathscr O(K_{\tilde X})\otimes \mathcal I(\varphi\circ \pi))=\mathscr O(K_X)\otimes \mathcal I(\varphi),$$
and thus
$$\mathcal I(\varphi)=\pi_*(\mathscr O(K_{\tilde X/X})\otimes \mathcal I(\varphi\circ \pi)),$$
where $K_{\tilde X/X}:=K_{\tilde X}-\pi^*K_X$.
\end{prop}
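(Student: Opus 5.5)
We prove the two identities locally on $X$, since both sides are coherent sheaves and the statement is local over $X$. Fix an open set $U\subset X$ (a small coordinate ball) and put $\tilde U=\pi^{-1}(U)$. A section of $\pi_*(\mathscr O(K_{\tilde X})\otimes\mathcal I(\varphi\circ\pi))$ over $U$ is a holomorphic $n$-form $\tilde f$ on $\tilde U$ such that $|\tilde f|^2e^{-\varphi\circ\pi}$ is locally integrable on $\tilde U$.

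The key observation is that $\pi$ is biholomorphic outside a proper analytic subset $E\subset\tilde X$, with image $S=\pi(E)$ of codimension at least $2$ after discarding the locus where $\pi$ is already an isomorphism. Hence $\tilde f$ descends to a holomorphic $n$-form $f$ on $U\setminus S$. By Hartogs' extension theorem, $f$ extends holomorphically across $S$ to all of $U$. (If one does not want to track the codimension of $S$, one can instead note that $f$ is $L^2_{\rm loc}$, as explained next, and $L^2$ holomorphic forms extend across analytic sets of positive codimension by the Riemann extension theorem for $L^2$ functions.) Conversely, any $f\in\Gamma(U,K_X)$ pulls back to $\pi^*f\in\Gamma(\tilde U,K_{\tilde X})$. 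Thus $\pi_*\mathscr O(K_{\tilde X})=\mathscr O(K_X)$, and it remains to match the integrability conditions.

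For this we use the change of variables formula for $(n,n)$-forms. Write $c_n=i^{n^2}$. Since $\pi$ is biholomorphic off null sets, for every compact $K\subset U$,
$$\int_{\pi^{-1}(K)} c_n\,\pi^*f\wedge\overline{\pi^*f}\,e^{-\varphi\circ\pi}=\int_K c_n\,f\wedge\bar f\,e^{-\varphi}.$$
Properness of $\pi$ ensures that $\pi^{-1}(K)$ is compact. Therefore local integrability on $\tilde U$ is equivalent to local integrability on $U$. Both conditions are intrinsic for $n$-forms, so no Jacobian factor appears. This gives the first identity.

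For the second identity, twist by $K_X^{-1}$ and apply the projection formula:
$$\pi_*(\mathscr O(K_{\tilde X})\otimes\mathcal I(\varphi\circ\pi))\otimes\mathscr O(-K_X)=\pi_*(\mathscr O(K_{\tilde X}-\pi^*K_X)\otimes\mathcal I(\varphi\circ\pi)),$$
which is valid because $K_X$ is locally free. This yields $\mathcal I(\varphi)=\pi_*(\mathscr O(K_{\tilde X/X})\otimes\mathcal I(\varphi\circ\pi))$.

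The only delicate point is the descent and extension step. One needs $f$ to be defined as a holomorphic form on all of $U$, not just on $U\setminus S$. The $L^2$ argument handles this cleanly, because $e^{-\varphi}$ is locally bounded below by a positive constant, $\varphi$ being bounded above. Hence $f$ is $L^2_{\rm loc}$ near $S$, and $L^2$ holomorphic functions extend across analytic subsets.
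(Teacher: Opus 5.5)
Your proposal is correct and is essentially the argument of \cite[Proposition 5.8]{Dem12}, which the paper cites without reproducing. That argument uses change of variables for the intrinsic $(n,n)$-forms $c_n f\wedge\bar f\,e^{-\varphi}$ off the exceptional set, extends the descended form across it because $e^{-\varphi}$ is locally bounded below (so $f\in L^2_{\mathrm{loc}}$), and then obtains the second identity by twisting with $K_X^{-1}$ via the projection formula; your Hartogs variant also works, since a modification onto a manifold is an isomorphism outside a set of codimension at least $2$.
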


\begin{rem}\label{rem: compare mis}
Suppose that $\psi$ is a quasi-psh function on $\tilde X$, such that $\mathcal I(\varphi\circ \pi)\subset\mathcal I(\psi) $,  from Proposition \ref{prop: direct image}, we have that 
$$\mathscr O(K_X)\otimes \mathcal I(\varphi)=\pi_*(\mathscr O(K_{\tilde X})\otimes \mathcal I(\varphi\circ \pi))\subset\pi_*(\mathscr O(K_{\tilde X})\otimes \mathcal I(\psi)). $$
Thus, 
$$\mathcal I(\varphi)\subset \pi_*(\mathscr O(K_{\tilde X/X})\otimes \mathcal I(\psi)).$$
\end{rem}
\
\subsection{Strong openness} Let $\varphi$ be a quasi-psh function on a complex manifold $X$.
Associated to $\varphi$, there is another ideal sheaf 
$$\mathcal I_+(\varphi):=\cup_{\varepsilon>0}\mathcal I((1+\varepsilon)\varphi).$$

\begin{thm}[\cite{GZ15}]\label{thm: soc}
$$\mathcal I_+(\varphi)=\mathcal I(\varphi).$$
\end{thm}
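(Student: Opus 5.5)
The statement to be proved is the strong openness property $\mathcal I_+(\varphi)=\mathcal I(\varphi)$ of Guan--Zhou. Throughout I use the convention that $\mathcal I_+(\varphi)$ consists of germs $f$ with $|f|^2e^{-(1+\varepsilon)\varphi}$ locally integrable for some $\varepsilon>0$.

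The inclusion $\mathcal I_+(\varphi)\subset\mathcal I(\varphi)$ is immediate. The question is local, so we may assume $\varphi\le 0$ is psh on a ball. Then $(1+\varepsilon)\varphi\le\varphi$, hence $e^{-\varphi}\le e^{-(1+\varepsilon)\varphi}$. Therefore every $\mathcal I((1+\varepsilon)\varphi)$ is contained in $\mathcal I(\varphi)$.

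The reverse inclusion is the real content. Fix a germ $f\in\mathcal I(\varphi)_o$ at a point $o$. We must show $\int_U|f|^2e^{-(1+\varepsilon)\varphi}<\infty$ for some small $\varepsilon>0$ and some neighbourhood $U$ of $o$. My approach is to argue by contradiction, using an Ohsawa--Takegoshi type $L^2$ extension with an \emph{optimal} constant on sublevel sets of the weight.

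The plan has four steps.

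\begin{enumerate}
\item \textbf{Coherence and Noetherianity.} The ideals $\mathcal I((1+\varepsilon)\varphi)$ increase as $\varepsilon\downarrow0$ and are coherent by Nadel's theorem. Hence $\mathcal I_+(\varphi)$ stabilizes: $\mathcal I_+(\varphi)=\mathcal I((1+\varepsilon_0)\varphi)$ for some $\varepsilon_0>0$, near $o$.

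\item \textbf{Setting up the contradiction.} Suppose $f\notin\mathcal I_+(\varphi)_o$. Consider the sublevel sets $\{\Psi<-t\}$ of the auxiliary weight $\Psi:=\log|F|^2-\varphi$, where $F$ runs over generators of $\mathcal I_+(\varphi)$ near $o$.

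\item \textbf{Optimal-constant extension.} On $\{\Psi<-t\}$, solve a $\bar\partial$-equation with an $L^2$ extension theorem whose constant is sharp. This produces holomorphic $F_t$ such that $f-F_t\in\mathcal I_+(\varphi)_o$ and
\[
\int|F_t|^2e^{-\varphi}\le C\,e^{-t}\int_{\{\Psi<-t\}}|f|^2e^{-\varphi}.
\]
Since $f\in\mathcal I(\varphi)$ and $\{\Psi<-t\}$ shrinks to a null set, the right-hand side is $o(e^{-t})$.

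\item \textbf{Closing the argument.} Compare this bound with the lower bound
\[
\inf\Big\{\int|\tilde f|^2e^{-\varphi}:\ \tilde f-f\in\mathcal I_+(\varphi)_o\Big\}>0,
\]
which holds because $f\notin\mathcal I_+(\varphi)_o$. This infimum is strictly positive by a closedness argument in $L^2$ (Montel's theorem together with coherence/Krull). Concretely, define $G(t):=\inf\{\int_{\{\Psi<-t\}}|\tilde f|^2e^{-\varphi}\}$; the extension estimate shows $G(-\log r)$ is concave in $r$. Concavity forces $G(t)\ge G(0)e^{-t}$. On the other hand, integrability with respect to $e^{-\varphi}$ gives $G(t)=o(e^{-t})$, which contradicts $G(0)>0$.
\end{enumerate}

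The main obstacle is step 3. It requires an $L^2$ extension or $\bar\partial$-estimate on sublevel sets with a gain factor that is exactly $e^{-t}$, with no loss in the constant. Any non-optimal constant destroys the concavity argument in step 4. To get this, I would first try the twisted Bochner--Kodaira--Nakano method with auxiliary functions solving an ODE, as in Guan--Zhou's optimal-constant proof. An alternative route is the reduction to the one-dimensional case via restriction to generic complex lines, which was Guan--Zhou's original induction-on-dimension approach. However, the concavity of $G$ is the cleaner and more robust path.
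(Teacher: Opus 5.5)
The paper gives no proof of this statement; it simply quotes Guan--Zhou \cite{GZ15}. Your sketch therefore has to stand on its own, and it has a genuine gap in Steps 2--4, which is where all the content lies. Step 1 and the positivity of the infimum in Step 4 (closedness of ideals of $\mathcal O_o$) are fine.

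\textbf{Where the argument fails.} First, the function $\Psi=\log|F|^2-\varphi$ is neither plurisubharmonic nor negative, and its sublevel sets need not contain $o$. For example, take $\varphi=\tfrac72\log|z|^2$ on the unit ball of $\mathbb C^2$. Then $\mathcal I_+(\varphi)=\mathfrak m^2$, and with the generators $z_1^2,z_1z_2,z_2^2$ one gets $\Psi\geq-\log2-\tfrac32\log|z|^2\to+\infty$ at $o$. So the optimal $L^2$ estimates on $\{\Psi<-t\}$ and the concavity of $G(-\log r)$ are unavailable, and the germ constraint at $o$ is not even meaningful there. Moreover, the concavity theorem needs the constraint ideal to contain $\mathcal I(\varphi+\Psi)_o=\mathcal I(\log|F|^2)_o$. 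In this example that ideal is $\mathfrak m\not\subseteq\mathfrak m^2$.

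Second, the estimate in Step 3 is not what $L^2$ methods give. Even for a negative psh sublevel function, the sharp estimates control the $\bar\partial$-correction only by a shell integral $\frac1B\int_{\{-t-B<\Psi<-t\}}|f|^2e^{-\varphi-\Psi}$, in which $e^{-\Psi}\geq e^{t}$. There is no gain of $e^{-t}$. In fact, if Step 3 held, the $F_t$ would be admissible competitors with $\int|F_t|^2e^{-\varphi}\to0$. That contradicts the positive infimum at once, so Step 3 is essentially the conclusion itself.

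Third, the claim $G(t)=o(e^{-t})$ does not follow from $f\in\mathcal I(\varphi)$. With weight $e^{-\varphi}$, integrability only gives $G(t)\to0$. The rate $o(e^{-t})$ would require $\int|f|^2e^{-\varphi-\Psi}=\int|f|^2/|F|^2<\infty$, which is unrelated to your hypothesis.

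\textbf{What is missing.} You need the right choice of data together with a limit. Normalize $\varphi<0$ on a ball $D$ with $\int_D|f|^2e^{-\varphi}<\infty$. Use the trivial weight and the sublevel sets of $p\varphi$ for $p>1$:
\[
 G_p(t):=\inf\Big\{\int_{\{p\varphi<-t\}}|\tilde f|^2:\ (\tilde f-f)_o\in\mathcal I(p\varphi)_o\Big\}.
\]
Guan's concavity theorem gives $G_p(t)\geq e^{-t}G_p(0)$. Since $\mathcal I(p\varphi)_o\subseteq\mathcal I_+(\varphi)_o$, we have $G_p(0)\geq c:=\inf\{\int_D|\tilde f|^2:(\tilde f-f)_o\in\mathcal I_+(\varphi)_o\}$. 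This $c$ is positive by your closedness argument and is the same for every $p>1$. Hence $\int_{\{\varphi<-s\}}|f|^2\geq G_p(ps)\geq ce^{-ps}$. Letting $p\downarrow1$ gives $\int_{\{\varphi<-s\}}|f|^2\geq ce^{-s}$. This contradicts
\[
 \int_D|f|^2e^{-\varphi}=\int_D|f|^2+\int_0^\infty e^s\int_{\{\varphi<-s\}}|f|^2\,ds<\infty .
\]
With the trivial weight and the sublevel sets of $\varphi$ itself, your ``$e^{-t}$ versus $o(e^{-t})$'' contradiction is correct.

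The limit in $p$ is essential. At $p=1$ the constraint ideal would be $\mathcal I(\varphi)_o$, which contains $f_o$, so $G_1(0)=0$. The positive constant $c$, independent of $p$ and coming from $\mathcal I(p\varphi)\subseteq\mathcal I_+(\varphi)$, is the idea your sketch lacks.
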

\begin{lem}\label{lemma:soc}
Let $\varphi$ and $\psi$ be quasi-plurisubharmonic functions on a
complex manifold $X$, neither identically equal to $-\infty$ on any
connected component. Then, for every $x\in X$, there exist an open
neighborhood $U_x$ of $x$ and a number $\varepsilon_x>0$ such that
\[
 \mathcal I(\varphi)|_{U_x}
 =
 \mathcal I(\varphi+\varepsilon\psi)|_{U_x},
 \qquad 0<\varepsilon\leq\varepsilon_x .
\]
Consequently, for every compact subset $K\Subset X$, there exist an
open neighborhood $U\supset K$ and a number $\varepsilon_K>0$ such that
\[
 \mathcal I(\varphi)|_U
 =
 \mathcal I(\varphi+\varepsilon\psi)|_U,
 \qquad 0<\varepsilon\leq\varepsilon_K .
\]
In particular, when $X$ is compact, one may choose a single
$\varepsilon_X>0$ for all points of $X$.
\end{lem}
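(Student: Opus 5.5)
Fix $x\in X$ and work on a small coordinate ball $B$ around $x$. On $B$ write $\psi=u+g$ with $u$ psh and $g$ smooth. Since $g$ is bounded on $B$, we may replace $\psi$ by $u$ without changing any of the multiplier ideals involved. Shrinking $B$, we may also assume $u\le 0$ on $B$, by subtracting a constant; a constant shift does not change $\mathcal I(\varphi+\varepsilon\psi)$ either. Then $\varphi+\varepsilon\psi\le\varphi$, which gives the inclusion $\mathcal I(\varphi+\varepsilon\psi)\subset\mathcal I(\varphi)$ for every $\varepsilon>0$. The content of the lemma is therefore the reverse inclusion, uniformly on a neighborhood and for all small $\varepsilon$.

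For the reverse inclusion I will use strong openness (Theorem \ref{thm: soc}) together with Noetherianity. The stalk $\mathcal I(\varphi)_x$ is finitely generated by germs $f_1,\dots,f_N$. By Theorem \ref{thm: soc}, each $f_j$ lies in $\mathcal I((1+\delta_j)\varphi)_x$ for some $\delta_j>0$. Take $p=1+\delta$ with $\delta=\min_j\delta_j$, and shrink $B$ so that each $f_j$ is holomorphic on $B$ and $\int_B|f_j|^2e^{-p\varphi}<\infty$. Let $q$ be the conjugate exponent of $p$. By Hölder,
\[
\int_B|f_j|^2e^{-\varphi-\varepsilon u}\le\Big(\int_B|f_j|^2e^{-p\varphi}\Big)^{1/p}\Big(\int_B|f_j|^2e^{-q\varepsilon u}\Big)^{1/q}.
\]
Since $u\not\equiv-\infty$, Skoda's integrability theorem gives $e^{-q\varepsilon u}\in L^1_{loc}$ near $x$ once $q\varepsilon\nu(u,y)<2$ for all $y$ near $x$. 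Lelong numbers are upper semicontinuous, so they are bounded near $x$, and it suffices to take $\varepsilon\le\varepsilon_x$ with $\varepsilon_x$ small. Hence $f_j\in\mathcal I(\varphi+\varepsilon\psi)$ on a smaller ball, for $0<\varepsilon\le\varepsilon_x$.

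It remains to pass from the generators at $x$ to a neighborhood. By coherence of $\mathcal I(\varphi)$ (Nadel), the $f_j$ generate $\mathcal I(\varphi)$ on a neighborhood $U_x$. The sheaf $\mathcal I(\varphi+\varepsilon\psi)$ is an ideal sheaf containing every $f_j$ on $U_x$, so it contains $\mathcal I(\varphi)|_{U_x}$. This gives equality on $U_x$ for $0<\varepsilon\le\varepsilon_x$. Note that the inclusion $\mathcal I(\varphi+\varepsilon\psi)\subset\mathcal I(\varphi)$ holds for every $\varepsilon$, so equality at $\varepsilon_x$ automatically propagates to all smaller $\varepsilon$; the Hölder argument in any case covers every $\varepsilon\le\varepsilon_x$ directly.

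For the compact statement, cover $K$ by finitely many $U_{x_i}$, let $U$ be their union, and set $\varepsilon_K=\min_i\varepsilon_{x_i}$. When $X$ is compact, take $K=X$.

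The main obstacle is making the exponent uniform on a neighborhood rather than only at the stalk. This is handled by choosing finitely many generators, for which the strong openness exponents are attained at a single point, and by the local boundedness of Lelong numbers in the Skoda step.
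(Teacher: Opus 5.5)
Your argument is correct and is essentially the paper's proof: finitely many generators of $\mathcal I(\varphi)$ via coherence, a single strong-openness exponent $p>1$ for all of them, Skoda integrability of $e^{-q\varepsilon\psi}$ near $x$, H\"older with the conjugate exponent $q$, the trivial reverse inclusion from $\psi\le 0$, and a finite subcover for compact $K$. The differences are cosmetic. You take generators of the stalk and spread them by coherence, whereas the paper takes them on a neighborhood directly. When you pass to $\delta=\min_j\delta_j$ you implicitly use that $\varphi$ is locally bounded above, where the paper simply normalizes $\varphi\le 0$. Finally, the appeal to upper semicontinuity of Lelong numbers is unnecessary, since Skoda's theorem at the single point $x$ already gives integrability on a neighborhood.
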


\begin{proof}
The assertion is local. After adding constants, which does not change
multiplier ideal sheaves, we may assume that $\varphi\leq0$ and
$\psi\leq0$ on a relatively compact coordinate neighborhood of a fixed
point $x$.

By the coherence of $\mathcal I(\varphi)$, after shrinking the
neighborhood we may choose holomorphic generators $f_1,\ldots,f_N$ of
$\mathcal I(\varphi)$. The strong openness theorem, applied to these
finitely many generators, gives a number $p>1$ and a smaller
neighborhood $U_x$ such that
\[
 \int_{U_x}|f_j|^2e^{-p\varphi}\,d\lambda<+\infty,
 \qquad 1\leq j\leq N.
\]
Put $q=p/(p-1)$. Write locally $\psi=\rho+g$, where $\rho$ is psh and
$g$ is smooth. Since $\nu(\rho,x)<+\infty$, choose $b>0$ such that
$\nu((b/2)\rho,x)<1$. By \cite[Lemma 5.6(a)]{Dem12},
$e^{-b\rho}$ is integrable near $x$; the smooth term $g$ therefore
gives, after shrinking $U_x$ once more,
\[
 \int_{U_x}e^{-b\psi}\,d\lambda<+\infty.
\]
Choose $\varepsilon_x>0$ so that $q\varepsilon_x<b$. H\"older's
inequality then gives, for $0<\varepsilon\leq\varepsilon_x$,
\[
\begin{aligned}
 \int_{U_x}|f_j|^2e^{-\varphi-\varepsilon\psi}\,d\lambda
 &=
 \int_{U_x}
 \bigl(|f_j|^2e^{-p\varphi}\bigr)^{1/p}
 \bigl(|f_j|^2e^{-q\varepsilon\psi}\bigr)^{1/q}
 \,d\lambda                                                   \\
 &\leq
 \left(\int_{U_x}|f_j|^2e^{-p\varphi}\,d\lambda\right)^{1/p}
 \left(\int_{U_x}|f_j|^2e^{-q\varepsilon\psi}\,d\lambda
 \right)^{1/q}<+\infty .
\end{aligned}
\]
Here the second factor is finite because the $f_j$ are bounded after a
further shrinking of $U_x$. Hence
\[
 \mathcal I(\varphi)|_{U_x}
 \subset
 \mathcal I(\varphi+\varepsilon\psi)|_{U_x}.
\]
Since $\psi\leq0$, monotonicity gives the opposite inclusion. This
proves the local assertion. For $K\Subset X$, take a finite covering by
neighborhoods of the above type and take the minimum of the corresponding
positive constants.
\end{proof}

Let $\mu:Y\to X$ be a modification and let $T$ be a closed almost
positive $(1,1)$-current on $X$. Locally write
\[
 T=\theta+\sqrt{-1}\partial\bar\partial\varphi,
\]
where $\theta$ is smooth and $\varphi$ is quasi-psh. Since no connected
component of $Y$ is mapped into the polar set of a local potential,
$\varphi\circ\mu$ is locally integrable, and we define
\[
 \mu^*T:=\mu^*\theta+
 \sqrt{-1}\partial\bar\partial(\varphi\circ\mu).
\]
These local definitions glue and are independent of the chosen
potentials.

\begin{lem}[{\cite[Satz3, Satz 4]{GR56}}]\label{lem: ex-psh}
	Let $X$ be a complex manifold, and $A\subset X$ be an analytic subset. Then 
	\begin{itemize}
 	\item[(1)] every plurisubharmonic function on $X\setminus A$ that is locally bounded from above near any point in $A$ extends uniquely to a plurisubharmonic function on $X$;
 	\item[(2)] if codim$_{\mathbb C}A\geq 2$ in $X$, every plurisubharmonic function on $X\setminus A$ extends  uniquely to a plurisubharmonic function on $X$.
		\end{itemize}
	\end{lem}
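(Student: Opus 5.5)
This lemma is a classical result of Grauert--Remmert, so I would not reprove it in full. I would sketch the standard argument in two steps.

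\textbf{Part (1).} The question is local. Fix $a\in A$ and a small ball $B$ around $a$ on which $u$ is bounded above on $B\setminus A$. Define
\[
\tilde u(z):=\limsup_{w\to z,\ w\notin A}u(w), \qquad z\in A\cap B .
\]
This is upper semicontinuous and bounded above. It remains to check the sub-mean value property on complex lines, i.e.\ that $\tilde u$ restricted to a one-dimensional disc $D$ through $z$ is subharmonic. If $D\not\subset A$, then $D\cap A$ is a discrete set. Bounded-above subharmonic functions extend across discrete (polar) sets in dimension one, via the function $\varepsilon\log|\zeta-\zeta_0|$ trick, so the extension is subharmonic on $D$. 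One also has to show that this one-variable extension agrees with $\tilde u$ at the points of $D\cap A$.

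The cleaner route is to avoid line-by-line checks and work with distributions. Since $u$ is bounded above and $A$ has Lebesgue measure zero, $u\in L^1_{loc}(B)$. Its $i\partial\bar\partial$ is positive on $B\setminus A$. Using cut-offs of the form $\chi(\log(-\log|f|))$ built from local defining functions $f$ of $A$, one shows that the currents $i\partial\bar\partial u\,\wedge$ test forms do not charge $A$ in the limit. The boundedness from above is exactly what controls the error term here, because $u$ minus a constant is $\le 0$. Hence $i\partial\bar\partial u\ge 0$ on $B$, so $u$ agrees a.e.\ with a unique psh function, namely its upper regularization. That function coincides with $\tilde u$ off $A$ and equals it on $A$.

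Uniqueness is immediate. Two psh functions equal a.e.\ are equal, since a psh function is the limit of its mean values over small balls.

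\textbf{Part (2).} Given (1), it suffices to show that $u$ is locally bounded above near $A$ when $\operatorname{codim}A\ge 2$. Fix $a\in A$ and choose coordinates so that a generic two-dimensional complex slice $\{z''=c\}$ meets $A$ in isolated points. Actually, with codimension $\ge 2$, one can choose the projection $(z',z'')$ with $z'\in\mathbb C^2$ so that $A$ projects properly with finite fibers over $z''$.

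Then use Hartogs' figure. On each slice, $u$ is psh on a bidisc minus finitely many points, hence psh on a neighborhood of a Hartogs figure avoiding $A$. By the maximum principle on analytic discs, with boundaries in the region where $u$ is bounded (say a shell $\{r_1<|z_1|<r_2\}$ away from $A$), $u$ is bounded on the filled region by its supremum on the compact boundary set. Concretely, for $|z_1|<r_2$, apply the maximum principle to the discs $\zeta\mapsto(\zeta,z_2,z'')$, $|\zeta|\le r_2$, which miss $A$ whenever $z$ is off a thin set. This gives $u(z)\le\sup_{|z_1|=r_2}u$, a bound uniform near $a$. The sup on the compact set $\{|z_1|=r_2\}\times\overline{\text{small polydisc}}$ is finite because that set is disjoint from $A$ by the choice of coordinates, and $u$ is usc there.

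Then (1) applies. I expect the main obstacle to be choosing coordinates so that these boundary circles avoid $A$ uniformly. This is where $\operatorname{codim}\ge2$ is used: a codimension-one $A$ such as $\{z_1=0\}$ would meet every such disc.
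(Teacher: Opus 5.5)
The paper gives no proof of this lemma. It is quoted from Grauert--Remmert, so your sketch has to stand on its own.

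Part (2) is the standard reduction and is fine. A finite linear projection onto $n-2$ coordinates gives a polydisc $\Delta_1\times\Delta_2\times\Delta''$ about $a$ with $A\cap(\partial\Delta_1\times\overline{\Delta_2}\times\overline{\Delta''})=\emptyset$. The maximum principle on the discs $\zeta\mapsto(\zeta,z_2,z'')$ then bounds $u$ by its supremum on that compact set. The only thing you leave out is the exceptional points $z\notin A$ whose disc meets $A$. These lie over the image of $A$ in $(z_2,z'')$-space, a proper analytic and hence null set, and the bound passes to them by the sub-mean value inequality on small balls in $X\setminus A$. The genuine gap is in part (1), on which (2) rests.

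In the route you commit to, the first step is a non sequitur: you conclude $u\in L^1_{\mathrm{loc}}(B)$ because $u$ is bounded above and $A$ is Lebesgue-null. An upper bound says nothing about how fast $u$ may tend to $-\infty$ at $A$, and integrability across $A$ is part of what must be proved. The second step also fails as stated. In $\int u\, i\partial\bar\partial(\chi_k\phi)\ge0$, the error terms involving $d\chi_k\wedge d\phi$ and $\phi\, i\partial\bar\partial\chi_k$ have no sign, and their coefficients blow up near $A$. So neither $u\le C$ nor $\|u\|_{L^1}$ controls them. Your line-by-line route stops exactly at its hard points: discs contained in $A$, and identifying the one-variable extension with $\tilde u$ on $D\cap A$.

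The missing idea is the usual pluripolar trick. Near $a\in A$, choose holomorphic $f\not\equiv0$ with $A\subset\{f=0\}$ and $|f|\le1$, and normalize $u\le C$. Set $u_\varepsilon:=u+\varepsilon\log|f|$ on $B\setminus A$ and $u_\varepsilon:=-\infty$ on $A$.
\begin{itemize}
\item The upper bound makes $u_\varepsilon$ upper semicontinuous along $A$, where the sub-mean value inequality is trivial, so $u_\varepsilon$ is psh on $B$.
\item $u_\varepsilon$ increases as $\varepsilon\downarrow0$, so $(\lim_{\varepsilon\to0}u_\varepsilon)^*$ is psh on $B$.
\item This limit coincides with $u$ almost everywhere on $B\setminus A$, hence everywhere there, since both are psh on $B\setminus A$.
\end{itemize}
Local integrability of $u$ then follows a posteriori from $u\ge u_\varepsilon$.

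Alternatively, your cutoff argument does go through for the bounded functions $\max(u,-N)$. For $\log\log$ cutoffs, $\int_K|d\chi_k|$ and the mass of $i\partial\bar\partial\chi_k$ on compact sets tend to zero. You then take the decreasing limit of the resulting extensions as $N\to\infty$.
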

\begin{lem}\label{lem: push-ext}
Let $\pi:\widetilde X\to X$ be a proper modification between complex
manifolds. Assume there is a compact analytic subset $E\subset X$ with
$\operatorname{codim}_{\mathbb C}E\geq2$ such that, putting
$A:=\pi^{-1}(E)$, the restriction
\[
 \pi:\widetilde X\setminus A\longrightarrow X\setminus E
\]
is biholomorphic. Let $\alpha$ be a smooth closed real $(1,1)$-form on
$X$, and let $T$ be a closed positive $(1,1)$-current on $\widetilde X$.
Assume that there is a global quasi-psh function $\psi$ on
$\widetilde X$ such that
\[
 T=\pi^*\alpha+\sqrt{-1}\partial\bar\partial\psi.
\]
Then $\pi_*T|_{X\setminus E}$ extends uniquely to a closed positive
$(1,1)$-current $S$ on $X$. Moreover,
\[
 [S]=[\alpha]\quad\text{in Bott--Chern cohomology computed with currents},
 \qquad \pi^*S=T.
\]
We denote this extension by $S=\pi_*T$.
\end{lem}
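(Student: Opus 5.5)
The plan is to push $\psi$ forward, extend it across $E$ by Lemma \ref{lem: ex-psh}(2), and then check the two identities on $\widetilde X$.

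\emph{Step 1 (the potential downstairs).} Cover $X$ by coordinate balls $U$, each so small that $\alpha=\sqrt{-1}\partial\bar\partial u_U$ for a smooth function $u_U$ on $U$. On $\pi^{-1}(U)$ we have
\[
T=\sqrt{-1}\partial\bar\partial(u_U\circ\pi+\psi),
\]
so $v_U:=u_U\circ\pi+\psi$ is psh on $\pi^{-1}(U)$. Since $\pi$ is biholomorphic off $A$, the function $v_U\circ\pi^{-1}$ is psh on $U\setminus E$. As $\operatorname{codim}_{\mathbb C}E\ge 2$, Lemma \ref{lem: ex-psh}(2) extends it uniquely to a psh function $w_U$ on $U$. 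Put $\varphi_U:=w_U-u_U$. On overlaps, $\varphi_U$ and $\varphi_{U'}$ both equal $\psi\circ\pi^{-1}$ off $E$. Since quasi-psh functions agreeing outside a closed set of measure zero agree everywhere (take the upper-semicontinuous regularization), they glue to a global quasi-psh function $\varphi$ on $X$ with $\varphi=\psi\circ\pi^{-1}$ on $X\setminus E$.

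\emph{Step 2 (the extension $S$).} Set $S:=\alpha+\sqrt{-1}\partial\bar\partial\varphi$. This is closed, and it is positive because locally it equals $\sqrt{-1}\partial\bar\partial w_U$. On $X\setminus E$ it coincides with $\pi_*T$. The class identity $[S]=[\alpha]$ holds because $\varphi$ is a global function.

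For uniqueness, suppose $S'$ is another closed positive extension. Then $S-S'$ is a closed current of order zero supported on $E$, and in bidegree $(1,1)$ such a current must be $0$. Indeed, by the support theorem a closed order-zero $(1,1)$-current on an analytic set of codimension $\ge 2$ vanishes, since its dimension $n-1$ exceeds $\dim E$. Alternatively, one can argue that both currents have local potentials that agree off $E$ and then apply Lemma \ref{lem: ex-psh}(2) again.

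\emph{Step 3 ($\pi^*S=T$).} By the definition of pullback preceding this lemma, $\pi^*S=\pi^*\alpha+\sqrt{-1}\partial\bar\partial(\varphi\circ\pi)$. It therefore suffices to show $\varphi\circ\pi=\psi$ on $\widetilde X$. The two quasi-psh functions agree on $\widetilde X\setminus A$. If $A$ is nowhere dense in $\widetilde X$, then two quasi-psh functions equal almost everywhere are equal, and we are done. The set $A$ is a proper analytic subset of each component, because $\pi$ is a modification (generically one-to-one, hence biholomorphic on a dense open set). So $A$ has measure zero, and the equality $\varphi\circ\pi=\psi$ holds as quasi-psh functions.

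\emph{Main obstacle.} The delicate point is Step 3 when $A$ has codimension one, for instance an exceptional divisor. There one must check that the extension $\varphi$ really satisfies $\varphi\circ\pi=\psi$ along $A$, rather than differing by a divisor term. Equality almost everywhere of quasi-psh functions gives this, because a quasi-psh function is determined by its values off a null set via $\varphi(x)=\operatorname{ess\,limsup}_{y\to x}\varphi(y)$. A second check is needed in Step 1, that $\psi\circ\pi^{-1}$ is not identically $-\infty$ near $E$. This holds because $\psi$ is quasi-psh and not identically $-\infty$ on any component.
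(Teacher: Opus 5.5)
Your proposal is correct and follows essentially the same route as the paper: extend $u_U+\psi\circ\pi^{-1}$ across $E$ by Lemma~\ref{lem: ex-psh}, set $S=\alpha+\sqrt{-1}\partial\bar\partial\varphi$, get uniqueness because a closed order-zero $(1,1)$-current supported on a set of complex codimension at least $2$ vanishes, and deduce $\pi^*S=T$ from the almost-everywhere equality $\varphi\circ\pi=\psi$. The differences are cosmetic. You apply part (2) of that lemma directly, whereas the paper first proves local upper boundedness via properness. You also quote Federer's support theorem, where the paper unfolds it through Demailly's Exercise~1.21 on $E_{\mathrm{reg}}$ and descending induction on the strata.
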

\begin{proof}
On $X\setminus E$, define
\[
 \phi:=\psi\circ
 \bigl(\pi|_{\widetilde X\setminus A}\bigr)^{-1}.
\]
Then $\pi_*T=\alpha+\sqrt{-1}\partial\bar\partial\phi$ there.

Fix $x\in E$ and choose a relatively compact coordinate neighborhood
$U$ such that $\alpha|_U=\sqrt{-1}\partial\bar\partial u$ for a smooth
real-valued function $u$. Properness and the local upper boundedness of
quasi-psh functions on compact sets show that $\phi$ is locally bounded
from above near $E\cap U$. Thus $u+\phi$ is psh on $U\setminus E$ and,
by Lemma \ref{lem: ex-psh}, has a unique psh extension $v_U$ to $U$.
The functions $\phi_U:=v_U-u$ agree on overlaps and define a global
quasi-psh function $\widehat\phi$ on $X$. Hence
\[
 S:=\alpha+\sqrt{-1}\partial\bar\partial\widehat\phi
\]
is a closed positive extension and $[S]=[\alpha]$.

If $S'$ is another closed positive extension, put $R:=S-S'$. Then $R$
is a normal current of real degree $2$ supported on $E$. On the regular
stratum of $E$ we have
$\operatorname{codim}_{\mathbb R}E_{\mathrm{reg}}\geq4$; hence
\cite[Exercise 1.21]{Dem12} gives $R=0$ near $E_{\mathrm{reg}}$.
Thus $\operatorname{Supp}R\subset E_{\mathrm{sing}}$, and descending
induction on the dimension of the analytic support gives $R=0$.

Finally, $\widehat\phi\circ\pi=\psi$ on
$\widetilde X\setminus A$. Both sides are locally integrable quasi-psh
functions on $\widetilde X$ and $A$ has Lebesgue measure zero, so they
define the same distribution. Therefore
\[
 \pi^*S
 =\pi^*\alpha+
 \sqrt{-1}\partial\bar\partial(\widehat\phi\circ\pi)
 =T.
\]
\end{proof}
\
\

\subsection{Complex analytic desingularizations}
\

In this section, we recall Hironaka's fundamental work of desingularizations of complex analytic spaces.
\begin{thm}[cf. \cite{Hir77}, {\cite[Theorem 3, Theorem 4]{AHV18}}]\label{thm: resolution}
Given a coherent ideal sheaf $J\neq (0)$ on a complex manifold $M$, there exists a complex manifold $\widetilde M$ and a proper bimeromorphic map $\tilde \pi:\widetilde M\rightarrow M$ such that the pull-back $\tilde J$ of $J$ by $\tilde \pi$ is locally normal crossings everywhere in $\widetilde M$. The $\tilde \pi$ is locally obtained by a finite sequence of blow-ups with smooth centers. To be precise, let $K$ be any compact subset of $M$. Then there exists an open neighborhood $Z_0$ of $K$ in $M$ such that $\tilde \pi|_{Z_0}$ is decomposed into a finite sequence of blow-ups 
\begin{align*}
\pi_j:Z_{j+1}\rightarrow Z_j, ~~0\leq j\leq r,
\end{align*}
where the center $D_j$ of $\pi_j$ is such that 
\begin{itemize}
\item [(1)] $D_j$ is closed and nowhere dense in $Z_j$ and 
\item [(2)] $D_j$ is smooth by itself.
\item [(3)] Let $U_k,~k=1,2,$ be two open subsets of $Z_0$. Assume that we have an isomorphism $t: U_1\rightarrow U_2$ such that $J|_{U_1}$ is the pull-back of $J|_{U_2}$. Then for every $j\geq 0$, we have an isomorphism $t(j):\pi(j)^{-1}(U_1)\rightarrow \pi(j)^{-1}(U_2)$ which makes a commutative diagram:
\[
\xymatrix{
	\pi(j)^{-1}(U_1) \ar[d]_{\pi(j)}  \ar[r]^{t(j)} & \pi(j)^{-1}(U_2) \ar[d]^{\pi(j)} \\
     U_1 \ar[r]^t  & U_2 }
\]
where $\pi(j)$ denotes the composition of $\pi_j$ for all $i\leq j$.

\end{itemize}
\end{thm}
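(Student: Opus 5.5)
This statement is quoted from the literature, so I would not reprove resolution of singularities from scratch. The plan is to derive the three listed properties from two inputs: Hironaka's principalization theorem for coherent ideals on complex manifolds \cite{Hir77}, and the functorial, locally finite form of it established in \cite{AHV18}. The argument therefore reduces to explaining how each listed property follows from the functorial principalization algorithm.

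\textbf{Step 1: principalization of $J$.} Since $J\neq(0)$, its cosupport $V(J)$ is a nowhere dense analytic subset of $M$. The principalization algorithm, run with the usual invariant (order of the ideal together with the exceptional-divisor data), gives a sequence of blow-ups along smooth centers. Each center is contained in the locus where the invariant is maximal. That locus lies inside $V(J)$ together with the exceptional divisors, so each center is closed and nowhere dense, which gives (1). Smoothness of the centers, property (2), is built into the algorithm: the maximal locus of the invariant is a smooth subvariety, transversal to the previously created exceptional divisors. At the end, the total transform $\tilde J=\tilde\pi^{-1}J\cdot\mathscr O_{\widetilde M}$ is an invertible ideal whose divisor, together with the exceptional divisors, is simple normal crossings.

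\textbf{Step 2: finiteness over compact sets and gluing.} On a noncompact $M$ the algorithm may need infinitely many steps globally, but the invariant is upper semicontinuous with values in a well-ordered set. Over a relatively compact neighborhood $Z_0$ of a given compact set $K$, the invariant takes finitely many values, so it stabilizes after finitely many blow-ups $\pi_j:Z_{j+1}\to Z_j$, $0\le j\le r$. The local modifications over an exhaustion of $M$ by such $Z_0$ are compatible because of the functoriality in Step 3. They therefore glue into a global proper bimeromorphic map $\tilde\pi:\widetilde M\to M$.

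\textbf{Step 3: functoriality.} Property (3) follows because the centers are determined canonically by the local analytic isomorphism type of $(M,J)$ at each point, together with the history of exceptional divisors. An isomorphism $t:U_1\to U_2$ with $t^*(J|_{U_2})=J|_{U_1}$ therefore carries centers to centers at every stage. By the universal property of blowing up, $t$ lifts to isomorphisms $t(j)$ making the stated square commute. The induction on $j$ is routine.

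\textbf{Main obstacle.} The genuinely hard part is the existence of an invariant that is both upper semicontinuous and strictly decreasing under the prescribed blow-ups, in the analytic (non-algebraic) category. This is exactly the content of \cite{Hir77, AHV18}, and I would cite it rather than reproduce it. In the paper, the theorem will only be used on relatively compact pieces of $X$, for instance to produce log resolutions of ideals generated by local approximations of $F$. For that use, the local finite-sequence version over $Z_0$ is all that is needed.
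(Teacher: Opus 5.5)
Your proposal matches the paper's approach: the paper gives no argument beyond citing \cite{Hir77} and \cite{AHV18}, and you likewise treat the theorem as quoted and defer its substance to those references. One point in your sketch is imprecise: gluing the constructions over an exhaustion requires that the algorithm run on a larger $Z_0'$, restricted to a smaller $Z_0$, agrees with the algorithm run on $Z_0$, up to inserting blow-ups whose centers miss $Z_0$. Such shifts occur because the center at each step is a maximal locus that depends on the whole domain. This needs its own justification and does not follow from property (3), which only compares open subsets of a single $Z_0$.
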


\begin{rem}
Here complex manifold $M$ is assumed to be connected and countable at infinity, i.e., it is a union of countably many compact subsets. Let $\dim_{\mathbb C}=\dim_{\mathbb C}\widetilde M=n$. The term local normal crossings means that  for every point $\widetilde \xi\in \widetilde M$ we have a local coordinate system $x=(x_1,\cdots, x_n)$ of $\widetilde M$ at $\widetilde \xi$ such that $\widetilde J$ is generated by a monomial in $x$ within a neighborhood of $\widetilde \xi$. If $n=1$, then $\tilde \pi$ must be an isomorphism and $J$ itself is local normal crossings from the beginning.
\end{rem}

\subsection{The Ohsawa-Takegoshi type extension theorem}
\begin{thm}[cf. {\cite[Theorem 1.1 and Remark 1.3]{ZZ17}}]
\label{thm: ext}
Let $(X,\omega)$ be a weakly pseudoconvex K\"ahler manifold of
dimension $n$, and let $L$ be a holomorphic line bundle over $X$ with a
singular Hermitian metric $h$. Let $s:X\rightarrow\mathbb C^r$ be a
holomorphic map, $1\leq r\leq n$, such that $0$ is a regular value of
$s$. Assume that $\sqrt{-1}\Theta_{L,h}\geq0$ and that
$|s(x)|\leq M$ on $X$. Put $Y:=s^{-1}(0)$ and endow
$\mathbb C^r$ with its standard flat Hermitian metric. The norms of
$\bigwedge^r(ds)$, $f$, and $F$ below are induced by this metric,
$\omega$, and $h$, and
\[
 dV_{X,\omega}:=\frac{\omega^n}{n!},
 \qquad
 dV_{Y,\omega}:=\frac{(\omega|_Y)^{n-r}}{(n-r)!}.
\]
Then every
\[
 f\in H^0\!\left(Y,(K_X\otimes L)|_Y\right)
\]
satisfying
\[
 \int_Y\frac{|f|_L^2}{|\bigwedge^r(ds)|^2}\,
 dV_{Y,\omega}<+\infty
\]
admits an extension $F\in H^0(X,K_X\otimes L)$ with $F|_Y=f$ and
\[
 \int_X|F|_L^2\,dV_{X,\omega}
 \leq C_{r,M}
 \int_Y\frac{|f|_L^2}{|\bigwedge^r(ds)|^2}\,
 dV_{Y,\omega},
\]
where $C_{r,M}$ depends only on $r$ and $M$.
\end{thm}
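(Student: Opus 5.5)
The plan is to reduce this statement to the optimal-constant $L^2$ extension theorem of Zhou--Zhu \cite{ZZ17} for weakly pseudoconvex K\"ahler manifolds and singular metrics, rather than reprove it from scratch. The only work is to match hypotheses and normalizations. Concretely, \cite[Theorem 1.1]{ZZ17} treats a weakly pseudoconvex K\"ahler manifold $(X,\omega)$, a line bundle $L$ with a singular metric $h$ of semipositive curvature current, and a closed submanifold cut out by a bounded holomorphic section of a vector bundle (equivalently, locally by $r$ functions with independent differentials). \cite[Remark 1.3]{ZZ17} records the special case of a global map $s:X\to\mathbb C^r$ with $0$ a regular value and $|s|\le M$. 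So the first step is to check the following identifications:
\begin{itemize}
\item the trivial bundle $\mathbb C^r$ with its flat metric plays the role of the vector bundle $E$;
\item the curvature condition on $E$ (flat) and the twisting condition are automatically satisfied once $\sqrt{-1}\Theta_{L,h}\ge0$, because the weight $\log|s|^2$ is then plurisubharmonic;
\item the normalization $|\bigwedge^r(ds)|^2$ and the volume forms $dV_{X,\omega}$, $dV_{Y,\omega}$ agree with those in \cite{ZZ17}, up to the standard factorial conventions.
\end{itemize}

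If instead one wants a self-contained argument, I would run the usual scheme in the following order.

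First, exhaust $X$ by sublevel sets $X_c=\{\Psi<c\}$ of a smooth psh exhaustion $\Psi$. Each $X_c$ minus an analytic set carries a complete K\"ahler metric.

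Second, on each $X_c$, regularize $h$ by Demailly's approximation $h_\varepsilon$ with curvature $\ge-\varepsilon\omega$. Since $h$ is only singular this loses a little positivity; the error is absorbed by $\varepsilon$ times a bounded potential on the relatively compact set.

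Third, extend $f$ locally to a smooth section $\tilde f$ cut off near $Y$ as $\chi(|s|^2/\delta^2)\tilde f$. Then solve $\bar\partial u=\bar\partial\bigl(\chi\tilde f\bigr)$ via the twisted Bochner--Kodaira--Nakano inequality with weights built from $\log|s|^2$, using the optimal auxiliary functions of Guan--Zhou type (which give the constant $C_{r,M}$ depending only on $r$ and $M$). The weight $e^{-r\log|s|^2}$ forces $u|_Y=0$.

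Fourth, let $\delta\to0$, $\varepsilon\to0$ and $c\to\infty$, extracting weak limits via the uniform $L^2$ bounds and Montel-type compactness of holomorphic sections.

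The main obstacle is the passage to the limit with the singular metric $h$. The approximants $h_\varepsilon$ only converge from one side, so the $L^2$ norms $\int_Y|f|^2_{h_\varepsilon}/|\bigwedge^r ds|^2$ must be controlled by the $h$-norm. For this I would use a decreasing regularization (so $|f|_{h_\varepsilon}\le|f|_h$) together with Fatou's lemma and monotone convergence in the limit. The positivity defect $-\varepsilon\omega$ has to be absorbed in the twisted estimate on each $X_c$ before letting $\varepsilon\to0$.
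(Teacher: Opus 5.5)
\textbf{Main route.} Your main route is the paper's: specialize \cite[Theorem 1.1 and Remark 1.3]{ZZ17} to $E=X\times\mathbb C^r$ with its flat metric. That proof consists entirely of matching hypotheses and normalizations, and the check you skip is the one substantive one.

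Besides the two curvature conditions, the cited theorem needs an upper bound on the weight. The curvature conditions do hold, because $\log|s|^2$ is psh and $\Theta_E=0$. With the paper's choices $\psi=0$, $\alpha=1$, the weight bound reads $r\log|s|^2\le-2r$, i.e.\ $|s|\le e^{-1}$. This follows neither from $\sqrt{-1}\Theta_{L,h}\ge0$ nor from $|s|\le M$ when $M$ is large. So your claim that everything beyond flatness is automatic is not correct.

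The paper arranges the bound by applying the theorem to $s_\lambda:=\lambda s$ with $\lambda=1/(e\max\{1,M\})$. This changes neither $Y$, nor the regular-value condition, nor $\sqrt{-1}\partial\bar\partial\log|s|^2$. It also chooses $R(\tau)=e^{-\tau}$, so that $C_R=1$ and the weight on the left is $e^\tau R(\tau)\equiv1$. The resulting right-hand side $\frac{(2\pi)^r}{r!}\int_Y|f|_L^2/|\bigwedge^r(ds_\lambda)|^2\,dV_{Y,\omega}$ is converted back via $\bigwedge^r(ds_\lambda)=\lambda^r\bigwedge^r(ds)$. This gives $C_{r,M}=\frac{(2\pi)^r}{r!}\bigl(e\max\{1,M\}\bigr)^{2r}$.

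This is the only place the hypothesis $|s|\le M$ is used. Your assertion that the normalizations agree ``up to the standard factorial conventions'' misses the factor $\lambda^{-2r}$. As written, your reduction never uses $M$ and does not explain why the constant depends on it.

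\textbf{Self-contained scheme.} Your optional self-contained scheme has a step that fails as stated. For a singular $h$ one cannot in general find smooth approximants $h_\varepsilon$ on $X_c$, decreasing or not, with curvature $\ge-\varepsilon\omega$. The loss of positivity in Demailly's smooth regularization is governed by Lelong numbers, not by an arbitrary $\varepsilon$.

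For a counterexample, take the blow-up of $\mathbb C^2$ at the origin, which is weakly pseudoconvex and K\"ahler, and let $L=\mathcal O(E)$. Equip $L$ with the singular metric $h$ for which the canonical section $\sigma_E$ has $|\sigma_E|_h\equiv1$; its curvature current is a positive multiple of $[E]$, so the theorem's hypotheses hold. Every smooth metric on $L$ near $E$ has curvature whose integral over $E$ is a fixed negative number, a multiple of $E\cdot E=-1$. That rules out curvature $\ge-\varepsilon\omega$ once $\varepsilon$ is small, and every $X_c$ contains $E$.

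Handling singular metrics on weakly pseudoconvex manifolds is exactly what the cited theorem takes care of, so the sketch does not replace the citation. For the statement at hand, the citation together with the rescaling above is the entire proof.
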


\begin{rem}
This is a direct specialization of
\cite[Theorem 1.1 and Remark 1.3]{ZZ17}. Let
$E:=X\times\mathbb C^r$ carry its standard flat metric and put
\[
 \lambda:=\frac{1}{e\max\{1,M\}},
 \qquad s_\lambda:=\lambda s.
\]
Then $|s_\lambda|\leq e^{-1}$. Apply the cited theorem with
$\psi=0$, $\alpha=1$, and $R(\tau)=e^{-\tau}$. The local weights of
$h$ are plurisubharmonic, hence locally bounded above, and
$\Theta_E=0$. Moreover, on $X\setminus Y$,
\[
 \sqrt{-1}\Theta_{L,h}
 +r\sqrt{-1}\partial\bar\partial\log|s_\lambda|^2\geq0,
 \qquad
 r\log|s_\lambda|^2\leq-2r,
\]
so all three curvature-side hypotheses of
\cite[Theorem 1.1]{ZZ17} hold. Since $C_R=1$ and
$e^\tau R(\tau)=1$, its estimate becomes
\[
 \int_X|F|_L^2\,dV_{X,\omega}
 \leq\frac{(2\pi)^r}{r!}
 \int_Y\frac{|f|_L^2}
 {|\bigwedge^r(ds_\lambda)|^2}\,dV_{Y,\omega}.
\]
As $\bigwedge^r(ds_\lambda)=\lambda^r\bigwedge^r(ds)$, one may take
\[
 C_{r,M}=\frac{(2\pi)^r}{r!}
 \bigl(e\max\{1,M\}\bigr)^{2r}.
\]
A proper-family formulation with additional multiplier-ideal conclusions
appears in \cite[Theorem 1.3]{ZZ20}.
\end{rem}


%
%
%
%
%
%
%

\section{Extension of quasi-psh functions  from invariance of plurigenera}
In this section, we prove an "extension" theorem for 
quasi-psh functions by using Siu's invariance of plurigenera and Demailly's approximation theorems.

\begin{thm}[\cite{Siu98}]\label{thm: siu-inv}Let $p:X\rightarrow \Delta$ be a smooth proper holomorphic projective submersion, and denote by  $X_t:=p^{-1}(t)$. Suppose for every $t\in \Delta$, $X_t$ is of general type, then for any $m\in \mathbb Z^+$, any $\sigma\in H^0(X_t,mK_{X_t})$ can be extended to $\tilde{\sigma}\in H^0(X,mK_X)$.
\end{thm}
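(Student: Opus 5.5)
This is Siu's theorem on invariance of plurigenera for projective families of general type (here quoted from \cite{Siu98}). I would follow Siu's two-tower argument, in the analytic form later streamlined by P\u aun. After shrinking $\Delta$ and translating, I may assume $t=0$. It is enough to extend every $\sigma\in H^0(X_0,mK_{X_0})$ to a neighborhood of $X_0$. Upper semicontinuity of $t\mapsto P_m(X_t)$ together with surjectivity of the restriction maps then gives constancy of $P_m$, and Grauert's direct image theorem turns local extensions into global ones on $\Delta$ (after shrinking). Fix an ample line bundle $A$ on $X$, replaced by a sufficiently high multiple so that two things hold: for every $0\le p<m$ the bundle $pK_X+A$ is globally generated on a neighborhood of $X_0$, and every section of $(pK_{X}+A)|_{X_0}$ extends. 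The second property should follow from Theorem \ref{thm: ext} applied to $L=(p-1)K_X+A$ with a smooth positively curved metric, provided $A$ dominates $(p-1)K_X$ positively. Let $\{s^{(p)}_j\}$ denote generators of $H^0(X_0,pK_{X_0}+A)$.

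The core step is an induction on $k$. The claim is that every section in $H^0(X_0,kmK_{X_0}+pK_{X_0}+A)$ of the form $\sigma^k s^{(p)}_j$ extends, with $L^2$ control, to the family. Write $k m+p=(k m+p-1)+1$. The extensions already obtained at stage $(k,p-1)$, or $(k-1,m-1)$ when $p=0$, define a singular metric $h$ on $(km+p-1)K_X+A$ with weight $\log\sum_j|\tilde s_j|^2$. This weight has nonnegative curvature. Global generation makes $|\sigma^k s^{(p)}_j|^2_h$ bounded on $X_0$, so Theorem \ref{thm: ext} with $L=(km+p-1)K_X+A$ and $r=1$, $s=t$, extends it. To keep the induction going, I will track uniform constants through the $L^2$ norms of Theorem \ref{thm: ext}. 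The constant $C_{1,M}$ there is universal, so the norms grow at most geometrically, like $C^k$.

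For the limiting step, let $\Phi_k:=\frac1k\log\sum_j|\tilde\sigma^{(k)}_j|^2$. This is a metric on $mK_X+\frac1kA$ with positive curvature. The $L^2$ bounds together with the sub-mean value property give a uniform upper bound, and $\Phi_k\ge\log|\sigma|^2-O(1/k)$ on $X_0$. Taking the upper semicontinuous regularization of $\limsup_k\Phi_k$ produces a psh weight $\Phi$ on $mK_X$ with $|\sigma|^2e^{-\Phi}\le C$ on $X_0$. Then $\frac{m-1}{m}\Phi$ is a psh metric on $(m-1)K_X$, and
\[
 |\sigma|^2e^{-\frac{m-1}{m}\Phi}\le C|\sigma|^{2/m},
\]
which is integrable on $X_0$. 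One more application of Theorem \ref{thm: ext} extends $\sigma$.

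The main obstacle is the role of general type, namely the degeneration of the $1/k$ twist $A$ and the loss of integrability when $\sigma$ vanishes, so that $\Phi\not\equiv-\infty$ on the total space. The issue is not the $k$-th step itself but the limit. Since $X_0$ is of general type, I would write $aK_{X}=A+D$ with $D$ effective near $X_0$, using general type of the fibers together with upper semicontinuity. I would then absorb $A$ by the bounded twist $\frac1k A\le \frac{a}{k}K$ to make the estimates independent of $A$ in the limit. Concretely, I would run the tower on $(km+a)K$, replacing the factor $A$ by $D$, and take $\limsup$ of $\frac{1}{k}\log$. Making this bookkeeping precise, in particular making sure that $D$ does not contain $X_0$ and does not destroy integrability, is the step I expect to need the most care.
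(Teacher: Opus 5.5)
The paper gives no proof of this statement; it is quoted from \cite{Siu98}. Your first three paragraphs are a different route: P\u aun's one-tower proof \cite{Pau07} of Siu's theorem for arbitrary smooth projective families \cite{Siu021}. That argument never uses general type, so once completed it proves more than is stated. It is sound, but the step carrying it, the uniform upper bound for $\Phi_k$, is misdescribed: ``the norms grow at most geometrically'' is not the mechanism.

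Theorem \ref{thm: ext} bounds each new extension only against the previous singular weight. With $\Sigma_{k,p}:=\sum_j|\widetilde\sigma^{(k,p)}_j|^2$ in a local trivialization, it gives $\int_B\Sigma_{k,p}/\Sigma_{k,p-1}\,d\lambda\leq C$ on every coordinate ball $B$, with $C$ independent of $k$. The reason is that on $X_0$ the ratio is $\sum_j|s^{(p)}_j|^2/\sum_i|s^{(p-1)}_i|^2$, or $|\sigma|^2\sum_j|s^{(0)}_j|^2/\sum_i|s^{(m-1)}_i|^2$ at the transitions. Turning this into unweighted bounds would need sup bounds for $\Sigma_{k,p-1}$, hence a shrinking of the domain at each of the $km$ steps, which the constants do not survive. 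Instead, for $B=B(x,r)$ of fixed radius, Jensen's inequality gives $\frac{1}{|B|}\int_B\log(\Sigma_{k,p}/\Sigma_{k,p-1})\,d\lambda\leq\log(C/|B|)$. Telescoping over the $km$ steps gives $\frac{1}{|B|}\int_B\Phi_k\,d\lambda\leq C'$, and the sub-mean value inequality at $x$ then gives $\Phi_k(x)\leq C'$. Generalized H\"older on the product of the $km$ ratios works equally well.

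Your last paragraph misplaces the difficulty, because nothing degenerates in the limit of your scheme. The twist contributes only $-\frac1k\Theta_{h_A}(A)\to0$ to the curvature. $\Phi\not\equiv-\infty$, since $\Phi\geq\limsup_k\Phi_k\geq\log|\sigma|^2-C$ on $X_0$. Finally, $|\sigma|^2e^{-\frac{m-1}{m}\Phi}\leq C|\sigma|^{2/m}$ is bounded, so the zeros of $\sigma$ cost nothing. The $D$-bookkeeping is therefore unnecessary. Since you would still take $\limsup_k\frac1k\log$, it would not spare you the estimate above anyway.

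In Siu's original argument, general type plays the opposite role: it removes the limit altogether. Take $a$ and a section $s_D$ of $aK_X-A$ near $X_0$ with $s_D|_{X_0}\not\equiv0$. Such $s_D$ exists because all fibers are of general type, via a Baire argument on $t\mapsto h^0(X_t,aK_{X_t}-A)$ and torsion-freeness of the direct image. Then for one fixed large $k$, $\Phi_k:=\frac{1}{km+a}\log\sum_j|\widetilde\sigma^{(k,0)}_js_D|^2$ is already a psh weight on $K_X$, and on $X_0$
\[
 |\sigma|^2e^{-(m-1)\Phi_k}\leq C\,|\sigma|^{2(k+a)/(km+a)}\,|s_D|^{-2(m-1)/(km+a)},
\]
which is integrable for $k\gg1$ by Skoda. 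Thus the tower is needed only up to a finite stage, and no uniform $L^2$ estimates enter.

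P\u aun's route buys the general case at the price of the telescoping estimate. Siu's route buys a finite argument at the price of the general type hypothesis. You should commit to one of them rather than mix the two.
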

As a direct consequence of Siu's invariance of plurigenera, the singular metric constructed from pluricanonical sections on the fiber can be extended to the ambient space to be a singular metric of pluricanonical bundles. Combined with the following result of Demailly (Theorem \ref{thm: dem-supercan}), we can get that any singular metric of the pluricanonical bundles on the fibers can be "extended" to the ambient space, by keeping the size of the associated multiplier ideal sheaf, i.e. Theorem \ref{thm: dnwz-1}.

\begin{thm}[{\cite[Proposition 19.8]{Dem12}}]\label{thm: dem-supercan} Let $X$ be a projective manifold  $\omega$ be a Hermitian metric on $X$. Let $h_{K_X} $ be the induced metric on $K_X$ by $\omega$, and $\alpha=\Theta_{h_{K_X}}$.  Assume that $K_X$ is big and fix a singular Hermitian metric $e^{-\varphi}h_{K_X+L}$ of curvature $\alpha+dd^c\varphi\geq 0$.  Then $\varphi$ is equal to a regularized limit 
\begin{align*}
\varphi=\left( \limsup_{m\rightarrow \infty}\frac{1}{m}  \log|\sigma_m|^2_{h^m_{K_X}}       \right)^*
\end{align*}
for a suitable sequence $\sigma_m\in H^0(X,mK_X)$ with $\int_X(\sigma_m\wedge\bar\sigma_m)^{1/m}\leq 1$.
\end{thm}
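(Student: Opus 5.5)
The plan is to follow Demailly's Bergman-kernel approximation: build $\sigma_m$ by $L^2$ extension from a point, normalize it by Hölder's inequality, and compare $\frac1m\log|\sigma_m|^2$ with $\varphi$ from both sides. Throughout, I read $h_{K_X+L}$ as $h_{K_X}$, so that $\alpha+dd^c\varphi\ge0$ is a singular metric on $K_X$. After adding a constant I may assume $\sup_X\varphi=0$.

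\emph{Positive weight.} Since $K_X$ is big, there is a quasi-psh $\varphi_0\le0$ with analytic singularities and $\alpha+dd^c\varphi_0\ge\varepsilon_0\omega$. Choose $\delta_m\downarrow0$ with $m\delta_m\to\infty$, for instance $\delta_m=m^{-1/2}$. Equip $(m-1)K_X$ with the weight
\[
 \Phi_m:=(m-1)\bigl((1-\delta_m)\varphi+\delta_m\varphi_0\bigr).
\]
Its curvature is $\geq (m-1)\delta_m\varepsilon_0\,\omega$, which tends to infinity. Write $mK_X=K_X+(m-1)K_X$.

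\emph{Lower bound by extension from a point.} Fix $x$ with $\varphi(x)>-\infty$ and $x\notin\{\varphi_0=-\infty\}$. Add the weight $2n\chi\log|z-x|$, with $\chi$ a cutoff; its negative curvature contribution is $O(1)\omega$, which is absorbed once $m\gg1$. Hörmander's $L^2$ estimate, or the Ohsawa--Takegoshi theorem for the point (Theorem \ref{thm: ext} applied on a Stein chart, with a global $\bar\partial$-correction), then gives $\sigma\in H^0(X,mK_X)$ with
\[
 |\sigma(x)|^2e^{-\Phi_m(x)}=1,\qquad \int_X|\sigma|^2e^{-\Phi_m}\,dV\le C,
\]
with $C$ independent of $m$ and $x$. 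Hölder with exponents $m$ and $m/(m-1)$ gives
\[
 \int_X|\sigma|^{2/m}\le\Bigl(\int|\sigma|^2e^{-\Phi_m}\Bigr)^{1/m}\Bigl(\int e^{\Phi_m/(m-1)}\Bigr)^{(m-1)/m}\le C',
\]
since $\Phi_m/(m-1)\le0$. Put $\sigma_m:=\sigma/C'^{m/2}$, so that $\int_X(\sigma_m\wedge\bar\sigma_m)^{1/m}\le1$. Then
\[
 \tfrac1m\log|\sigma_m(x)|^2\ge\tfrac{m-1}{m}\bigl((1-\delta_m)\varphi(x)+\delta_m\varphi_0(x)\bigr)-\tfrac{C''}{m},
\]
which tends to $\varphi(x)$.

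\emph{Choosing one sequence.} The statement requires a single $\sigma_m$ for each $m$. Fix a countable dense set $\{x_k\}$ of good points, chosen so that every point is a limit of $x_k$'s with $\varphi(x_k)\to\varphi(x)$; this is possible because a quasi-psh function is the $\limsup$ of its values off a pluripolar set. Let the sequence visit each $x_k$ for infinitely many $m$. This gives $\limsup_m\frac1m\log|\sigma_m(x_k)|^2\ge\varphi(x_k)$ for every $k$, hence $\bigl(\limsup_m\frac1m\log|\sigma_m|^2\bigr)^*\ge\varphi$ everywhere.

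\emph{Upper bound.} Apply the sub-mean-value inequality to $|\sigma_m|^2$ on a ball $B(z,r)$, using $\int|\sigma|^2e^{-\Phi_m}\le C$:
\[
 \tfrac1m\log|\sigma_m(z)|^2\le\tfrac{m-1}{m}(1-\delta_m)\sup_{B(z,r)}\varphi+\tfrac{C-2n\log r}{m},
\]
using $\varphi_0\le0$. Let $m\to\infty$ first and then $r\to0$. Upper semicontinuity of $\varphi$ gives $\limsup\le\varphi$, and the same bound survives the usc regularization.

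I expect the main obstacle to be reconciling the single-sequence requirement with pointwise equality. The density argument only yields the inequality $\ge$ after regularization, so the choice of the points $x_k$, and the uniformity in $x$ of the constants in the $L^2$ extension step, have to be checked carefully; the uniformity in $x$ uses compactness of $X$ and the fact that $\chi$ and the chart family can be fixed once. The other delicate point is absorbing the negative part of $dd^c(\chi\log|z-x|)$ uniformly, which is exactly what the growing positivity $(m-1)\delta_m\varepsilon_0\omega$ provides.
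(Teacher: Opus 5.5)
Your scheme matches the standard argument behind the result the paper only cites from Demailly (the paper gives no proof of its own). It uses Ohsawa--Takegoshi on a coordinate ball with a global $\bar\partial$-correction for the weight $(m-1)\bigl((1-\delta_m)\varphi+\delta_m\varphi_0\bigr)+2n\chi\log|z-x|$, then H\"older, the sub-mean-value inequality, and a diagonal choice of points. Those steps are sound.

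The genuine gap is the normalization. Replacing $\sigma$ by $\sigma_m=(C')^{-m/2}\sigma$ shifts $\frac1m\log|\sigma_m|^2$ by $-\log C'$, not by $O(1/m)$. With your bound $C'=C^{1/m}\operatorname{Vol}_\omega(X)^{(m-1)/m}$ one has $\log C'\to\log\operatorname{Vol}_\omega(X)$. The same shift enters your upper bound, so your construction yields $\varphi-\log\operatorname{Vol}_\omega(X)$, not $\varphi$.

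Nor can you first arrange $\sup_X\varphi=0$, because the conclusion is not invariant under $\varphi\mapsto\varphi+c$. In fact the statement as quoted is false without a normalization. Locally $\sigma=f\,(dz)^{\otimes m}$ with $|f|^{2/m}$ psh. The sub-mean-value inequality and $\int_X(\sigma\wedge\bar\sigma)^{1/m}\le1$ then give $\frac1m\log|\sigma|^2_{h^m}\le C_X$ for all $m$ and all admissible $\sigma$. Hence $\varphi+c$ is never such a regularized limit once $c>C_X-\sup_X\varphi$. Replacing $\omega$ by $\lambda\omega$ leaves $\alpha$ unchanged but lowers $C_X$ by $n\log\lambda$, so even the condition $\sup_X\varphi=0$ does not rescue it.

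What is missing is a hypothesis such as $\int_X e^{\varphi}\,dV_\omega\le1$, with $dV_\omega$ normalized so that $(\sigma\wedge\bar\sigma)^{1/m}=|\sigma|^{2/m}_{h^m}dV_\omega$. This is exactly the quantity your H\"older factor produces in the limit. You must also stop bounding that factor by the volume. By dominated convergence, $\int_Xe^{(1-\delta_m)\varphi+\delta_m\varphi_0}dV_\omega\to\int_Xe^{\varphi}dV_\omega\le1$, so $\log C'_m\to\log\int_Xe^{\varphi}dV_\omega\le0$.

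Now put $\sigma_m:=\min\{1,(C'_m)^{-m/2}\}\,\sigma$. The resulting shift $\max\{0,\log C'_m\}$ tends to $0$, and your lower and upper bounds then give exactly $\varphi$. After taking $\frac1m\log$, the upper bound also carries an $O(r)$ term from the oscillation of $h^m$ on $B(z,r)$. This is harmless because you let $r\to0$ last.
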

\begin{rem} It is quite interesting that if there is similar  approximation  as above   for $K_X$ pseudo-effective.
	\end{rem}

Let $p:X\rightarrow \Delta$ be a smooth proper holomorphic projective submersion, and denote by  $X_t:=p^{-1}(t)$. Suppose for every $t\in \Delta$, $X_t$ is of general type. Let $\omega$ be a Hermitian metric on $X$, and $h_{K_{X/\Delta}}$ be the induced metric on $K_{X/\Delta}\simeq K_X$ by $\omega$. Let $\alpha$ be the corresponding curvature. Let $\varphi$ be an $\alpha|_{X_t}$-psh function on $X_t$. Since $X_t$ is of general type, then from Theorem \ref{thm: dem-supercan}, there  is a suitable chosen sequence $\sigma_m\in H^0(X_t, mK_{X_t})$, such that 
\begin{align*}
\varphi=\left( \limsup_{m\rightarrow \infty}\frac{1}{m}  \log|\sigma_m|^2_{h^m_{K_{X_t}}}       \right)^*
\end{align*}
with $\int_X(\sigma_m\wedge\bar\sigma_m)^{1/m}\leq 1$.
Now from Siu's invariance of plurigenera, i.e. Theorem \ref{thm: siu-inv}, we have $\widetilde\sigma_m\in H^0(X,mK_{X})$ such that $\widetilde\sigma_m|_{X_t}=\sigma_m\wedge dt$. Put
\begin{align*}
\Phi=\left(\limsup_{m\rightarrow \infty}\frac{1}{m}  \log|\widetilde\sigma_m|^2_{h^m_{K_X}} \right)^*
\end{align*}
Then it is easy to see that $\alpha+dd^c\Phi\geq 0$ in the sense of current, and $\Phi|_{X_t}\geq \varphi$, thus $\mathcal I(\varphi)\subset \mathcal I(\Phi|_{X_t})$. 
We thus proved the following 
\begin{thm}\label{thm: dnwz-1}
Let $p:X\rightarrow \Delta$ be a smooth proper holomorphic projective submersion, and denote by  $X_t:=p^{-1}(t)$. 
Suppose for every $t\in \Delta$, $X_t$ is of general type. Let $\omega$ be a Hermitian metric on $X$, and $h_{K_{X/\Delta}}$ be the induced metric on $K_{X/\Delta}\simeq K_X$ by $\omega$. 
Let $\alpha$ be the corresponding curvature. 
Let $\varphi$ be an $\alpha|_{X_t}$-psh function on $X_t$. 
Then there is an $\alpha$-psh function $\Phi$ on $X$, such that $\Phi|_{X_t}\geq \varphi$ on $X_t$ and thus $\mathcal I(\varphi)\subset \mathcal I(\Phi|_{X_t})$.
\end{thm}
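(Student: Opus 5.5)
The plan follows the sketch already given before the statement, with the details of the comparison filled in. Fix $t$ and an $\alpha|_{X_t}$-psh function $\varphi$ on $X_t$. Since $X_t$ is of general type, $K_{X_t}$ is big. Theorem \ref{thm: dem-supercan}, applied with the metric $h_{K_{X_t}}$ induced by $\omega|_{X_t}$, then gives sections $\sigma_m\in H^0(X_t,mK_{X_t})$ with
\[
\varphi=\Bigl(\limsup_{m\to\infty}\tfrac1m\log|\sigma_m|^2_{h^m_{K_{X_t}}}\Bigr)^*,
\qquad \int_{X_t}(\sigma_m\wedge\bar\sigma_m)^{1/m}\leq1 .
\]
Using the adjunction $K_X|_{X_t}\simeq K_{X_t}$, given by $u\mapsto u\wedge dt$, Theorem \ref{thm: siu-inv} produces extensions $\widetilde\sigma_m\in H^0(X,mK_X)$ with $\widetilde\sigma_m|_{X_t}=\sigma_m\wedge dt^{\otimes m}$.

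The first step is to normalize the pointwise norms. I would arrange the Hermitian metric on $K_X$ so that its restriction to $X_t$ agrees with $h_{K_{X_t}}$ under this adjunction, up to a fixed smooth positive factor $e^{c}$. This can be done by choosing $\omega$ so that $|dt|_\omega\equiv 1$ along $X_t$, or simply by absorbing the smooth factor $|dt|^2_\omega$, which contributes $O(1)$ after dividing by $m$ in the limit. This gives
\[
\tfrac1m\log|\widetilde\sigma_m|^2_{h^m_{K_X}}\Big|_{X_t}=\tfrac1m\log|\sigma_m|^2_{h^m_{K_{X_t}}}+O(1/m).
\]

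Next I set $\psi_m:=\frac1m\log|\widetilde\sigma_m|^2_{h^m_{K_X}}$. Each $\psi_m$ is $\alpha$-psh by the Poincaré–Lelong formula. The main obstacle is to make $\Phi=(\limsup_m\psi_m)^*$ a genuine $\alpha$-psh function. For this the family $\{\psi_m\}$ must be locally uniformly bounded above, at least after shrinking $\Delta$ and working over a relatively compact subdisc.

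To get the upper bound I would use the $L^{2/m}$-control. The $L^2$ estimate in the extension step, or Siu's effective form of it, bounds $\int_{X'}|\widetilde\sigma_m|^{2/m}$ uniformly in $m$ over a slightly smaller family $X'$. The mean-value inequality for the psh function $\log|\widetilde\sigma_m|^{2/m}$ in local trivializations then gives $\psi_m\leq C$ on compact subsets. If Siu's theorem is used only qualitatively, so that this integral control is not available, I would instead renormalize: replace $\widetilde\sigma_m$ by $\widetilde\sigma_m$ minus a correction vanishing on $X_t$, chosen to minimize the $L^{2/m}$ norm. Alternatively I would cite the Ohsawa–Takegoshi-type bound of Theorem \ref{thm: ext} used in Păun's proof. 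In either case I expect $\sup_m\sup_K\psi_m<\infty$, and then $\Phi$ is $\alpha$-psh. It is not identically $-\infty$, because its restriction dominates $\varphi$.

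Finally, the restriction. For $x\in X_t$ we have $\limsup_m\psi_m(x)=\limsup_m\frac1m\log|\sigma_m|^2(x)$. Upper regularization in $X$ dominates upper regularization along $X_t$ only in the following pointwise sense: $\Phi|_{X_t}\geq\limsup_m\psi_m|_{X_t}$ everywhere, and this limsup equals $\varphi$ off a pluripolar set of $X_t$. Since $\Phi|_{X_t}$ is quasi-psh, or identically $-\infty$, which is excluded here, and the inequality $\Phi|_{X_t}\geq\varphi$ holds off a Lebesgue-null set, the inequality holds everywhere on $X_t$. Here one uses that a quasi-psh function is the essential limsup of its values, applied to $\varphi$. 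The inclusion $\mathcal I(\varphi)\subset\mathcal I(\Phi|_{X_t})$ then follows from the monotonicity remark after the definition of multiplier ideals.
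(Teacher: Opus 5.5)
Your proposal follows the paper's argument: Demailly's approximation on $X_t$, Siu's extension of each $\sigma_m$, and $\Phi$ taken as the regularized $\limsup$ of $\psi_m=\frac1m\log|\widetilde\sigma_m|^2_{h^m_{K_X}}$. Your pluripolar-set argument for $\Phi|_{X_t}\geq\varphi$ only makes explicit what the paper calls ``easy to see''.

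The one place where you go beyond the paper is also the one place where your write-up is not yet a proof. You are right that $\Phi$ is $\alpha$-psh only if the $\psi_m$ are locally uniformly bounded above on $X$. The paper does not address this, and the issue is real. Siu's theorem gives no control on the extension: adding to $\widetilde\sigma_m$ a huge multiple of a section of $mK_X$ vanishing on $X_t$ leaves the restriction unchanged, but can push the $\limsup$ to $+\infty$ off $X_t$. So the extensions must be chosen with an $m$-independent bound, and you should state that step rather than ``expect'' it.

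The needed input is a quantitative $L^{2/m}$ extension theorem, such as that of Berndtsson--P\u aun. Its hypothesis, that the section extends at all, is exactly what Siu's theorem supplies. It gives $\int_X(\widetilde\sigma_m\wedge\overline{\widetilde\sigma}_m)^{1/m}\leq C_0\int_{X_t}(\sigma_m\wedge\bar\sigma_m)^{1/m}\leq C_0$ with $C_0$ independent of $m$. This is where the normalization $\int(\sigma_m\wedge\bar\sigma_m)^{1/m}\leq1$ from Theorem \ref{thm: dem-supercan} enters. The sub-mean-value inequality for the psh functions $|f_m|^{2/m}$, where $f_m$ are local coefficients of $\widetilde\sigma_m$, then bounds $\psi_m$ on every compact subset of $X$. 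In particular you do not need to shrink $\Delta$, which matters since the statement asks for $\Phi$ on all of $X$.

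Your second and third suggestions do not give an $m$-independent constant separately, but combined they do. Take the $L^{2/m}$-minimal extension $U$ and apply Theorem \ref{thm: ext} to $mK_X=K_X\otimes(m-1)K_X$ with the semipositive metric $|U|^{-2(m-1)/m}$. H\"older's inequality together with the minimality of $U$ then gives the bound. That combination is precisely the Berndtsson--P\u aun argument, so either write it out or cite it.
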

\begin{rem}\label{rem:dnwz-1}
Let $\varphi$ be an $m\alpha|_{X_t}$-psh function on $X_t$, that is $m\alpha|_{X_t}+dd^c\varphi\geq 0$. Then $\alpha|_{X_t}+dd^c\frac{\varphi}{m}\geq 0$. 
From Theorem \ref{thm: dnwz-1}, we can find an $\alpha$-psh function $\Phi$ on $X$, such that $\Phi\geq \frac{\varphi}{m}$ on $X_t$. 
So $m\Phi\geq \varphi$ on $X_t$ and $\mathcal I(\varphi)\subset \mathcal I(m\Phi|_{X_t})$, $m\alpha+dd^cm\Phi\geq 0$.
\end{rem}

\section{Extension of quasi-psh functions with control of multiplier ideal sheaves}
Let $p:X\rightarrow \Delta$ be a proper holomorphic submersion, with $X$ a K\"ahler manifold. Let $\omega$ be a K\"ahler metric on $X$.  Assume that $K_{X_t}$ is nef for any $t\in \Delta$. The following useful  lemma is due to P\u aun.
\begin{defn}[\cite{Dem90}] Let $[\alpha]\in H_{BC}^{1,1}(X,\mathbb R)$ be a real $(1,1)$-class with smooth representative $\alpha$.  We say $[\alpha]$ is nef if for every $\varepsilon>0$ there exists a function $f_\varepsilon\in \mathcal C^\infty(X)$ such that 
$$\alpha+i\partial\bar\partial f_\varepsilon\geq -\varepsilon\omega.$$
\end{defn}
\begin{rem}
On a compact Hermitian manifold, the preceding metric definition is the
one introduced in \cite{Dem90}; for a line bundle on a projective
manifold it agrees with numerical effectiveness. The total space used
here is non-compact. We therefore keep the K\"ahler form $\omega$ fixed
and use the terminology only for the relative canonical bundle, for
which the required metric estimate is supplied by the following theorem.
\end{rem}

\begin{thm}[{\cite[Corollary 1.3]{Pa17}}]\label{lem: paun}
Assume that $K_{X_t}$ is nef for every $t\in\Delta$. Then
$K_{X/\Delta}$ is nef in the metric sense. If $t$ denotes the standard
coordinate on $\Delta$, the nowhere-vanishing form $dt$ trivializes
$K_\Delta$ and yields
\[
 K_X\simeq K_{X/\Delta}\otimes p^*K_\Delta\simeq K_{X/\Delta}.
\]
Thus the same assertion can be expressed for $K_X$ after this
trivialization.
\end{thm}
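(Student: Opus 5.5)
Fix a K\"ahler form $\omega$ on $X$ and a smooth representative $\theta$ of $c_1(K_{X/\Delta})$, for instance the curvature of the metric induced by $\omega$ on the relative canonical bundle. Since the definition of nefness is only metric (the total space is non-compact), the plan is to produce, for each $\eta>0$, a smooth function $\phi_\eta$ on $X$, or on $p^{-1}(\Delta_r)$ for every $r<1$ followed by a diagonal exhaustion, such that
\[
 \theta+\eta\omega+\sqrt{-1}\partial\bar\partial\phi_\eta>0 .
\]
This is P\u aun's total-space positivity for fiberwise twisted K\"ahler--Einstein metrics.

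First, for each $t$, the class $c_1(K_{X_t})+\eta[\omega_t]$ is K\"ahler because $K_{X_t}$ is nef. By the Aubin--Yau theorem we solve the twisted K\"ahler--Einstein equation
\[
 (\theta_t+\eta\omega_t+\sqrt{-1}\partial\bar\partial\phi_{\eta,t})^n
 =e^{\phi_{\eta,t}}\,dV_{\omega_t},
\]
where $dV_{\omega_t}$ is the volume form whose Ricci curvature is $-\theta_t$. Uniqueness of the solution and the implicit function theorem show that $\phi_{\eta,t}$ depends smoothly on $t$. This gives a smooth function $\phi_\eta$ on $X$ which is fiberwise strictly $(\theta+\eta\omega)$-psh.

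Second, we prove positivity in the horizontal direction. Let $\Omega_\eta:=\theta+\eta\omega+\sqrt{-1}\partial\bar\partial\phi_\eta$. Then $\Omega_\eta^{n+1}/(\Omega_\eta^n\wedge\sqrt{-1}dt\wedge d\bar t)$ equals the geodesic-curvature function $c(\phi_\eta)$. Differentiating the equation twice in $t$ and $\bar t$ in the style of Schumacher and P\u aun, we get an elliptic inequality on each fiber:
\[
 -\Delta_{\Omega_{\eta,t}}c(\phi_\eta)+c(\phi_\eta)
 \ \ge\ |\bar\partial v|^2+\eta\,(\text{horizontal part of }\omega)\ \ge\ \eta\,c_0>0,
\]
where $v$ is the horizontal lift of $\partial_t$. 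The positive term comes exactly from the twisting $\eta\omega$, because $\omega$ is K\"ahler on the total space. The maximum principle on the compact fiber then gives $c(\phi_\eta)>0$. Together with fiberwise positivity, this yields $\Omega_\eta>0$ on the total space. For $\varepsilon>0$ we take $\eta=\varepsilon$ and $f_\varepsilon=\phi_\varepsilon$, obtaining $\theta+i\partial\bar\partial f_\varepsilon\ge-\varepsilon\omega$. Under $K_X\simeq K_{X/\Delta}$, induced by the nowhere-vanishing form $dt$, the same statement holds for $K_X$.

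The main obstacle is the second step. The computation of the Laplacian of $c(\phi_\eta)$ must produce a term bounded below by a positive multiple of $\eta$ that comes from $\omega$, while the Ricci term coming from $\theta$ must cancel against the equation. Without a K\"ahler $\omega$ on $X$ the sign fails. I would first check the computation at a fixed point in normal coordinates adapted to $\Omega_{\eta,t}$, following the Kähler--Einstein case. The twisting then adds $\eta\,\omega(v,\bar v)\ge0$ in the horizontal direction, and $\eta$ times a vertical contribution that is controlled by the maximum principle.
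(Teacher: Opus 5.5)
Your proposal is correct and follows the same route as the paper, which cites P\u aun for this statement and reproduces his argument in the proof of Theorem~\ref{thm: family mid}. The steps match: fiberwise twisted K\"ahler--Einstein potentials with $(\theta_t+\eta\omega_t+\sqrt{-1}\partial\bar\partial\phi_{\eta,t})^n=e^{\phi_{\eta,t}}\omega_t^n$, smooth dependence on $t$ via uniqueness, then the identity $-\Delta_{\Omega_{\eta,t}}c+c=|\bar\partial v|^2+\eta\,\omega(v,\bar v)$ with the minimum principle on each compact fiber, which gives $\theta+\sqrt{-1}\partial\bar\partial\phi_\eta>-\eta\omega$ on all of $X$.

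Two small clarifications. Since $\operatorname{Ric}_{X/\Delta}(\Omega_\eta)=-\Omega_\eta+\eta\omega$ holds on the whole total space, the twist enters exactly as $\eta\,\omega(v,\bar v)$ with $v$ the $\Omega_\eta$-horizontal lift, so there is no separate "vertical contribution" to control. Also, no exhaustion over $p^{-1}(\Delta_r)$ is needed, because uniqueness already defines $\phi_\eta$ on all of $X$.
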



\
\

We now give the replacement needed for the application to pluricanonical
forms. Its proof uses the special Monge--Amp\`ere representatives of the
relative canonical class; it does not hold for an arbitrary nef class.

\begin{thm}\label{thm: family mid}
Let $p:X\rightarrow\Delta$ be a proper holomorphic submersion, with $X$
K\"ahler, and assume that $K_{X_t}$ is nef for every $t\in\Delta$.
Let $k\geq1$, let $\alpha$ be a smooth real $d$-closed $(1,1)$-form
representing the Bott--Chern class $c_1(kK_{X/\Delta})$, and let
$\varphi\in\operatorname{PSH}(X_0,\alpha|_{X_0})$.
Then, for every $0<r<1$, there exists
\[
 \Phi\in\operatorname{PSH}
 \bigl(p^{-1}(\Delta_{1-r}),\alpha\bigr)
\]
such that
\[
 \varphi\leq \Phi|_{X_0}+O(1),
 \qquad
 \mathcal I(\varphi)\subset\mathcal I(\Phi|_{X_0}).
\]
In particular, the conclusion holds without assuming that $\varphi$ has
analytic singularities.
\end{thm}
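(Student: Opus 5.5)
The plan is to follow the strategy outlined in the introduction: build a global $\alpha$-psh function $\Phi$ on $p^{-1}(\Delta_{1-r})$ whose restriction to $X_0$ has \emph{minimal singularities} in the class $[\alpha|_{X_0}]=c_1(kK_{X_0})$. Once this is done, any $\varphi\in\operatorname{PSH}(X_0,\alpha|_{X_0})$ satisfies $\varphi\leq V_0+O(1)\leq \Phi|_{X_0}+O(1)$. Here $V_0:=\sup\{\psi\in\operatorname{PSH}(X_0,\alpha|_{X_0}):\psi\leq0\}$ is the extremal envelope. The inclusion $\mathcal I(\varphi)\subset\mathcal I(\Phi|_{X_0})$ is then immediate from monotonicity of multiplier ideals under the bounded shift. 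So everything reduces to producing $\Phi$ with $\Phi|_{X_0}\geq V_0-C$. No analytic-singularity hypothesis on $\varphi$ is needed, since minimal singularities dominate every element of the class.

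\textbf{Step 1 (approximate positive potentials on the total space).} For each $\eta>0$, I use P\u aun's construction behind Theorem \ref{lem: paun}. Solve fiberwise the twisted Kähler--Einstein (Monge--Ampère) equations
\[
 (\alpha/k+\eta\omega+\sqrt{-1}\partial\bar\partial\phi_{\eta,t})^n=e^{\phi_{\eta,t}}\,\omega_t^n\cdot(\text{smooth density}),
\]
in the class $c_1(K_{X_t})+\eta[\omega_t]$; these classes are Kähler since $K_{X_t}$ is nef. P\u aun's result then gives that $\phi_\eta(x):=\phi_{\eta,p(x)}(x)$ is smooth and $\alpha/k+\eta\omega+\sqrt{-1}\partial\bar\partial\phi_\eta>0$ on all of $X$, not only fiberwise. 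I will work with $k\phi_\eta$, which is $(\alpha+k\eta\omega)$-psh.

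\textbf{Step 2 (uniform comparison with envelopes; the main obstacle).} I will normalize $\psi_{\eta,t}:=\phi_{\eta,t}-\sup_{X_t}\phi_{\eta,t}$ and invoke the Di Nezza--Guedj--Guenancia estimates to obtain
\[
 |\psi_{\eta,t}-V_{\eta,t}|\leq C
\]
on $X_t$, with $C$ independent of $t\in\overline{\Delta}_{1-r/2}$ and of $\eta\in(0,1]$. Here $V_{\eta,t}$ is the envelope in $c_1(K_{X_t})+\eta[\omega_t]$. The key point is that the constant must not depend on the volume of the class, which may collapse as $\eta\to0$ when $K_{X_t}$ is not big. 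I expect this to be the hardest step: it requires checking that the DNGG hypotheses, namely uniform bounds on the densities in $L^p$ and uniform Skoda-type integrability, hold uniformly in the family. I then also need to show that $t\mapsto\sup_{X_t}\phi_\eta$ varies by a bounded amount over $\Delta_{1-r/2}$. For this I would integrate the equation, so that $e^{\sup\phi_{\eta,t}}\int e^{\psi_{\eta,t}}\omega_t^n=\mathrm{vol}(c_1(K_{X_t})+\eta\omega_t)$, and use that the volume is a cohomological polynomial in $\eta$, locally constant in $t$. The two-sided bound on $\psi$ from the comparison then controls $\int e^{\psi}$. After this, I can subtract a single constant $c_\eta$ and get a family $\Phi_\eta:=k(\phi_\eta-c_\eta)$ which is uniformly bounded above on $p^{-1}(\Delta_{1-r/2})$ and satisfies $\Phi_\eta|_{X_0}\geq kV_{\eta,0}-C'$.

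\textbf{Step 3 (limit).} Since $V_{\eta,0}\geq V_0$ (the class only grows), we get $\Phi_\eta|_{X_0}\geq kV_0-C'$, which is the same as $V_0^{(\alpha)}-C'$ for the class $[\alpha|_{X_0}]$. The $\Phi_\eta$ are $(\alpha+k\eta\omega)$-psh, uniformly bounded above, and do not tend to $-\infty$ uniformly, because of the lower bound on $X_0$. By Hartogs compactness, I extract $\Phi:=(\limsup_{\eta\to0}\Phi_\eta)^*$, or an $L^1_{loc}$ limit, on $p^{-1}(\Delta_{1-r})$. Letting $\eta\to0$ gives that $\Phi$ is $\alpha$-psh. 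The lower bound $\Phi|_{X_0}\geq V_0-C'$ persists because $\limsup^*$ dominates pointwise limsup, and restriction to the fiber is compatible since the bound holds pointwise everywhere on $X_0$. This yields the theorem.
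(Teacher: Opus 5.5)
Your proposal is correct and follows the paper's proof essentially step by step. The shared steps are: P\u aun's twisted K\"ahler--Einstein potentials with total-space positivity; the volume-independent comparison $V_{\eta,t}-M\leq\psi_{\eta,t}\leq V_{\eta,t}$ from Di Nezza--Guedj--Guenancia; integrating the equation so the common volume cancels when comparing fiberwise suprema; one total-space normalization; and a compactness limit whose restriction to $X_0$ has minimal singularities. Your use of $\alpha/k$ as reference form with density $e^{q/k}$, where $\alpha=k\theta+\sqrt{-1}\partial\bar\partial q$ with the paper's $\theta$ and $q$, is just the paper's $\Phi=kF-q$ in disguise.

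One ordering point in Step 2 needs fixing. The uniform lower bound on $\int_{X_t}e^{\psi_{\eta,t}}\,d\nu_t$ is needed before invoking DNGG, because it is what makes the normalized density $e^{\psi_{\eta,t}}/\int_{X_t}e^{\psi_{\eta,t}}\,d\nu_t$ uniformly bounded. You should therefore obtain it, as the paper does, directly from the uniform Skoda estimate and Jensen's inequality, not from the comparison.
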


\begin{proof}
We first reduce to the case where $X$ is connected.  Indeed, every
connected component of $X$ maps onto $\Delta$: its image is open because
$p$ is a submersion and closed because $p$ is proper.  There are only
finitely many such components, since the compact manifold $X_0$ has
finitely many connected components.  We can therefore carry out the
construction below on each component and combine the resulting functions,
which are defined on mutually disjoint open-and-closed subsets of $X$.
Assume henceforth that $X$ is connected. Since $p$ is a proper
submersion, Ehresmann's theorem shows that it is a differentiable
locally trivial fibration. In particular, the connected components of
the fibers form a finite covering of $\Delta$. Equivalently, in the
Stein factorization
\[
 X\longrightarrow S\longrightarrow\Delta,
\]
the finite map $S\to\Delta$ is unramified. Since $X$, hence $S$, is
connected and $\Delta$ is simply connected, this covering has degree
one. Thus every fiber $X_t$ is connected and, being a complex
manifold, is irreducible.

Fix $R:=1-r$ and choose $R<R'<R''<1$. Let $\omega$ be a K\"ahler form on
$X$, let $h^\omega$ be the smooth metric on $K_{X/\Delta}$ induced by
$\omega$, and set
\[
 \theta:=\Theta_{h^\omega}(K_{X/\Delta}).
\]
Write $n:=\dim_{\mathbb C}X_t$, which is independent of $t$.
Since the smooth real $d$-closed forms $\alpha$ and $k\theta$ represent
the same Bott--Chern class, there exists a smooth real-valued function
$q$ on $X$ such that
\begin{equation}\label{eq:alpha-k-theta}
 \alpha=k\theta+\sqrt{-1}\partial\bar\partial q.
\end{equation}

For $0<\eta\leq1$, the class
$[\theta_t+\eta\omega_t]=c_1(K_{X_t})+\eta[\omega_t]$ is K\"ahler.
P\u aun's fiberwise twisted K\"ahler--Einstein construction
\cite[Theorem 3.2 and equations (8), (10), (11)]{Pa17} gives smooth
functions $\phi_{\eta,t}$ satisfying
\begin{equation}\label{eq:twisted-ke}
 (\theta_t+\eta\omega_t+
   \sqrt{-1}\partial\bar\partial\phi_{\eta,t})^n
 =e^{\phi_{\eta,t}}\omega_t^n.
\end{equation}
The construction and the curvature computation are local on the base,
in particular, \cite[Section 4.1]{Pa17} applies them to a K\"ahler
family over a disk. The uniqueness in the fiberwise equation and smooth
dependence for the corresponding elliptic equation show that the
functions $\phi_{\eta,t}$ define a smooth function $\phi_\eta$ on $X$.

Let $c(\rho_\eta)$ denote the geodesic curvature of the form below, and
let $v$ be the horizontal lift of $\partial/\partial t$. P\u aun's
calculation \cite[Section 3.2, equation (35)]{Pa17} gives
\[
 \Box_t c(\rho_\eta)
 =-c(\rho_\eta)+|\bar\partial v|^2
   +\eta\omega(v,\bar v).
\]
If $x_{\min}\in X_t$ is a minimum point of
$c(\rho_\eta)|_{X_t}$, then, with P\u aun's positive-spectrum
convention, $\Box_t c(\rho_\eta)(x_{\min})\leq0$. Hence
\[
 c(\rho_\eta)(x_{\min})
 \geq |\bar\partial v|^2(x_{\min})
      +\eta\omega(v,\bar v)(x_{\min})>0.
\]
The last inequality is strict because $dp(v)=\partial/\partial t$, so
$v\neq0$, and $\omega$ is K\"ahler. Together with positivity in the
vertical directions, this proves
\begin{equation}\label{eq:paun-total-positive}
 \rho_\eta:=\theta+\eta\omega+
 \sqrt{-1}\partial\bar\partial\phi_\eta>0.
\end{equation}
Notice that no quantitative lower bound from
\cite[equation (36)]{Pa17} is used; in particular, no diameter-dependent
constant enters the argument.

For $|t|\leq R'$, put
\[
 \mathcal V_\eta:=\int_{X_t}(\theta_t+\eta\omega_t)^n,
 \qquad
 W:=\int_{X_t}\omega_t^n,
 \qquad
 d\nu_t:=W^{-1}\omega_t^n.
\]
Both $\mathcal V_\eta$ and $W$ are independent of $t$. By the same
fiber-integration argument as in \cite[Lemma 2.2]{DNGG}, integration
along the fibers commutes with $d$, and hence
\[
 d\,p_*\bigl((\theta+\eta\omega)^n\bigr)=0,
 \qquad d\,p_*(\omega^n)=0.
\]
These pushforwards are zero-forms on the connected disk, so they are
constant.  Notice that $\mathcal V_\eta$ may nevertheless depend on
$\eta$ and tend to zero as $\eta\downarrow0$.  Set
\[
 c_{\eta,t}:=\sup_{X_t}\phi_{\eta,t},\qquad
 u_{\eta,t}:=\phi_{\eta,t}-c_{\eta,t},\qquad
 J_{\eta,t}:=\int_{X_t}e^{u_{\eta,t}}d\nu_t.
\]

We claim that there are constants $a,A,M>0$, depending at most on
$R',R'',\omega,\theta$ and on the connected component under
consideration, but independent of $t$ and $\eta$, such that
\begin{equation}\label{eq:J-uniform}
 A^{-1/a}\leq J_{\eta,t}\leq1
\end{equation}
and
\begin{equation}\label{eq:u-envelope}
 V_{\eta,t}-M\leq u_{\eta,t}\leq V_{\eta,t},
\end{equation}
where
\[
 V_{\eta,t}:=
 \sup\{v\in\operatorname{PSH}(X_t,\theta_t+\eta\omega_t):v\leq0\}^{*}.
\]
Indeed, on the compact set $p^{-1}(\overline\Delta_{R''})$ one can choose
$C_0>0$ such
that
\[
 C_0\omega_t-(\theta_t+\eta\omega_t)\geq0
\]
for all $|t|\leq R''$ and $0<\eta\leq1$. Hence
\[
 \operatorname{PSH}(X_t,\theta_t+\eta\omega_t)
 \subset\operatorname{PSH}(X_t,C_0\omega_t).
\]
By the reduction at the beginning of the proof, all fibers are
irreducible. Moreover, a holomorphic submersion is locally trivial in the
sense of \cite[Assumption 3.2(2)]{DNGG}. For the inverse image of each
disk used below, the base and all fibers are connected; hence its total
space is connected and, being a complex manifold, irreducible. Thus the
restricted family satisfies Setting 2.1 of \cite{DNGG}. The fixed form
$\Theta:=C_0\omega$ satisfies
\[
 -C_0\omega\leq\Theta\leq C_0\omega,
\]
and each class $[\Theta|_{X_t}]=C_0[\omega_t]$ is K\"ahler, hence
pseudo-effective. Cover $\overline\Delta_{R'}$ by finitely many
coordinate disks whose concentric double disks are relatively compact in
$\Delta_{R''}$.  After translation and rescaling on each double disk, the
uniform Skoda estimate \cite[Theorem 3.4]{DNGG}, applied to the fixed form
$C_0\omega$, gives constants $a_j,C_j>0$ on its inner disk.  Set
\[
 a:=\min_j a_j>0,
 \qquad A:=\max\left\{1,\max_j\frac{C_j}{W}\right\}.
\]
Decreasing the exponent from $a_j$ to $a$ preserves the estimate because
$v-\sup_{X_t}v\leq0$.  Since $d\nu_t=W^{-1}\omega_t^n$, we obtain
\begin{equation}\label{eq:uniform-skoda}
 \int_{X_t}e^{-a(v-\sup_{X_t}v)}d\nu_t\leq A
\end{equation}
for all functions in the latter class. Applied to $u_{\eta,t}$, this and
Jensen's inequality give
\[
 \int_{X_t}u_{\eta,t}d\nu_t\geq-\frac{\log A}{a},
 \qquad
 J_{\eta,t}\geq
 \exp\!\left(\int_{X_t}u_{\eta,t}d\nu_t\right)
 \geq A^{-1/a}.
\]
The other inequality in \eqref{eq:J-uniform} follows from
$u_{\eta,t}\leq0$.

Integrating \eqref{eq:twisted-ke} and using Stokes' theorem gives the
exact identity
\begin{equation}\label{eq:fiber-normalization-inv}
 \mathcal V_\eta=e^{c_{\eta,t}}WJ_{\eta,t}.
\end{equation}
Consequently, after division by its total mass,
\eqref{eq:twisted-ke} becomes
\begin{equation}\label{eq:normalized-ke-inv}
 \mathcal V_\eta^{-1}
 (\theta_t+\eta\omega_t+
  \sqrt{-1}\partial\bar\partial u_{\eta,t})^n
 =\frac{e^{u_{\eta,t}}}{J_{\eta,t}}d\nu_t.
\end{equation}
Set
\[
 f_{\eta,t}:=\frac{e^{u_{\eta,t}}}{J_{\eta,t}}.
\]
By definition of $J_{\eta,t}$,
\[
 \int_{X_t}f_{\eta,t}\,d\nu_t=1.
\]
Moreover, \eqref{eq:J-uniform} and $u_{\eta,t}\leq0$ give
\[
 0\leq f_{\eta,t}\leq A^{1/a},
 \qquad
 \|f_{\eta,t}\|_{L^2(d\nu_t)}\leq A^{1/a}.
\]
We now check all the hypotheses of \cite[Theorem 1.9]{DNGG}. The class
$[\theta_t+\eta\omega_t]$ is K\"ahler and hence big, and its volume is
\[
 \operatorname{vol}([\theta_t+\eta\omega_t])=\mathcal V_\eta.
\]
The function $u_{\eta,t}$ is smooth, has
$\sup_{X_t}u_{\eta,t}=0$, and therefore has minimal singularities.
Equation \eqref{eq:uniform-skoda} is precisely hypothesis {\rm(H1)} for
the reference probability measure $d\nu_t$, while the preceding $L^2$
estimate is hypothesis {\rm(H2)} with $p=2$. Finally,
\eqref{eq:normalized-ke-inv} says that $u_{\eta,t}$ solves the
normalized Monge--Amp\`ere equation occurring in that theorem.
Consequently, \cite[Theorem 1.9]{DNGG} yields
\eqref{eq:u-envelope}. Its explicit constant depends only on $n$, the
fixed exponent $p=2$, and the uniform constants $a,A$ (and hence on the
bound $A^{1/a}$ in {\rm(H2)}), but not on $t$, $\eta$, or
$\mathcal V_\eta$. Thus no positive lower bound for
$\mathcal V_\eta$ is required.

Since $[\theta_0]=c_1(K_{X_0})$ is nef, it is pseudo-effective and
$\operatorname{PSH}(X_0,\theta_0)$ is non-empty. For completeness,
choose $\varepsilon_\ell\downarrow0$ and, by nefness applied with
$\varepsilon_\ell/2$, smooth functions $b_\ell$ such that
\[
 \theta_0+\varepsilon_\ell\omega_0+
 \sqrt{-1}\partial\bar\partial b_\ell
 \geq\frac{\varepsilon_\ell}{2}\omega_0>0,
 \qquad \sup_{X_0}b_\ell=0.
\]
The compactness theorem for quasi-psh functions gives, after extraction,
a non-identically $-\infty$ limit
$b\in\operatorname{PSH}(X_0,\theta_0)$. Thus the following extremal
envelope is not identically $-\infty$:
\[
 V_{\theta_0}:=
 \sup\{v\in\operatorname{PSH}(X_0,\theta_0):v\leq0\}^{*}.
\]

Every competitor defining $V_{\theta_0}$ is also a competitor defining
$V_{\eta,0}$, so \eqref{eq:u-envelope} gives
\begin{equation}\label{eq:central-envelope-inv}
 u_{\eta,0}\geq V_{\theta_0}-M.
\end{equation}

Solving \eqref{eq:fiber-normalization-inv} for $c_{\eta,t}$ and using
\eqref{eq:J-uniform}, we obtain
\begin{equation}\label{eq:fiber-sup-inv}
 \log\mathcal V_\eta-\log W
 \leq c_{\eta,t}
 \leq\log\mathcal V_\eta-\log W+\frac{\log A}{a}.
\end{equation}
Although $\log\mathcal V_\eta$ may tend to $-\infty$, it is common to all
fibers and cancels in their comparison.

We now use one total-space constant, rather than the fiberwise constants
$c_{\eta,t}$. Define
\[
 C_\eta:=\sup_{p^{-1}(\overline\Delta_{R'})}\phi_\eta
          =\max_{|t|\leq R'}c_{\eta,t},
 \qquad F_\eta:=\phi_\eta-C_\eta.
\]
Equations \eqref{eq:paun-total-positive},
\eqref{eq:central-envelope-inv}, and \eqref{eq:fiber-sup-inv} imply
\begin{align}
 F_\eta&\leq0
       &&\text{on }p^{-1}(\overline\Delta_{R'}),\label{eq:F-upper-inv}\\
 \theta+\sqrt{-1}\partial\bar\partial F_\eta
      &\geq-\eta\omega,\label{eq:F-curvature-inv}\\
 F_\eta|_{X_0}&\geq V_{\theta_0}-M-\frac{\log A}{a}.
       \label{eq:F-central-inv}
\end{align}

Choose $\eta_j\downarrow0$. Since $V_{\theta_0}\not\equiv-\infty$,
we can fix $x_*\in X_0$ with $V_{\theta_0}(x_*)>-\infty$. Equation
\eqref{eq:F-central-inv} gives
\[
 \inf_j F_{\eta_j}(x_*)>-\infty.
\]
On the relatively compact domain
\[
 p^{-1}(\Delta_R)\Subset p^{-1}(\Delta_{R'}),
\]
the functions $F_{\eta_j}$ have the common upper bound zero and satisfy
the uniform quasi-psh estimate
\[
 \sqrt{-1}\partial\bar\partial F_{\eta_j}
 \geq-\theta-\eta_j\omega.
\]
The compactness dichotomy for quasi-psh functions therefore applies.
The lower bound at $x_*$, together with the submean inequality in a
coordinate ball centered at $x_*$, excludes the alternative that the
sequence converges locally uniformly to $-\infty$. After passing to a
subsequence, it converges in $L^1_{\mathrm{loc}}$ on
$p^{-1}(\Delta_R)$, and stability of the current inequality gives
\[
 F:=\left(\limsup_{j\to\infty}F_{\eta_j}\right)^*
 \in\operatorname{PSH}(p^{-1}(\Delta_R),\theta).
\]
Since every $F_{\eta_j}\leq0$ on the larger domain
$p^{-1}(\Delta_{R'})$, we also have $F\leq0$ on
$p^{-1}(\Delta_R)$.

For every $x\in X_0$, upper semicontinuous regularization gives
\[
 \begin{aligned}
 F(x)
 &\geq\limsup_{j\to\infty}F_{\eta_j}(x)\\
 &\geq V_{\theta_0}(x)-M-\frac{\log A}{a}.
 \end{aligned}
\]
In particular, $F|_{X_0}\not\equiv-\infty$, so the restriction is a
well-defined $\theta_0$-psh function. Since it is also non-positive, the
definition of $V_{\theta_0}$ yields
\[
 V_{\theta_0}-M-\frac{\log A}{a}
 \leq F|_{X_0}\leq V_{\theta_0}.
\]
Thus $F|_{X_0}$ has minimal singularities in $c_1(K_{X_0})$.

Finally, define
\[
 \Phi:=kF-q
\]
Equation \eqref{eq:alpha-k-theta} gives
\[
 \alpha+\sqrt{-1}\partial\bar\partial\Phi
 =k\bigl(\theta+\sqrt{-1}\partial\bar\partial F\bigr)\geq0,
\]
so $\Phi$ is $\alpha$-psh. Moreover,
\[
 \psi:=\frac{\varphi+q|_{X_0}}{k}
 \in\operatorname{PSH}(X_0,\theta_0).
\]
Since $F|_{X_0}$ has minimal singularities, there is a constant $B$
such that $\psi\leq F|_{X_0}+B$. Therefore
\[
 \varphi\leq kF|_{X_0}-q|_{X_0}+kB
            =\Phi|_{X_0}+kB.
\]
With the multiplier-ideal convention used in this paper, this inequality
gives locally
\[
 e^{-\Phi|_{X_0}}\leq e^{kB}e^{-\varphi},
\]
and hence
\[
 \mathcal I(\varphi)\subset\mathcal I(\Phi|_{X_0}).
\]
Performing the construction on each of the finitely many connected
components considered at the beginning and combining the resulting
functions completes the proof in general.
\end{proof}

\begin{cor}\label{cor: family mid}
Let $p:X\rightarrow\Delta$ be a proper holomorphic submersion from a
K\"ahler manifold $X$, assume that $K_{X_t}$ is nef for every
$t\in\Delta$, and let $m\geq2$. Let $\alpha$ be a smooth real
$d$-closed $(1,1)$-form representing
$c_1((m-1)K_{X/\Delta})$, and let
$\varphi\in\operatorname{PSH}(X_0,\alpha|_{X_0})$. Then, for every
$0<r<1$, there exists
$\Phi\in\operatorname{PSH}(p^{-1}(\Delta_{1-r}),\alpha)$ such that
\[
 \varphi\leq\Phi|_{X_0}+O(1),
 \qquad
 \mathcal I(\varphi)\subset\mathcal I(\Phi|_{X_0}).
\]
Via the trivialization of $K_\Delta$ by $dt$, the same statement may be
written with $(m-1)K_X$ in place of $(m-1)K_{X/\Delta}$.
\end{cor}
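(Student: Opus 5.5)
The corollary is the special case $k=m-1$ of Theorem \ref{thm: family mid}, so I will reduce to that theorem. Since $m\geq2$, the integer $k:=m-1$ satisfies $k\geq1$. By hypothesis, $\alpha$ is a smooth real $d$-closed $(1,1)$-form whose Bott--Chern class is
\[
 c_1((m-1)K_{X/\Delta})=c_1(kK_{X/\Delta}),
\]
and $\varphi\in\operatorname{PSH}(X_0,\alpha|_{X_0})$. These are exactly the hypotheses of Theorem \ref{thm: family mid}. Applying it with this $k$ and the given $0<r<1$ produces $\Phi\in\operatorname{PSH}(p^{-1}(\Delta_{1-r}),\alpha)$ with $\varphi\leq\Phi|_{X_0}+O(1)$ and $\mathcal I(\varphi)\subset\mathcal I(\Phi|_{X_0})$. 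No further estimate is needed, because the theorem places no analytic-singularity assumption on $\varphi$.

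For the final sentence, I will use the trivialization in Theorem \ref{lem: paun}. The nowhere-vanishing form $dt$ trivializes $K_\Delta$, so $K_X\simeq K_{X/\Delta}\otimes p^*K_\Delta\simeq K_{X/\Delta}$ canonically, and hence $(m-1)K_X\simeq(m-1)K_{X/\Delta}$. Under this isomorphism, the smooth metric on $(m-1)K_{X/\Delta}$ with curvature $\alpha$ corresponds to a smooth metric on $(m-1)K_X$ with the same curvature. So a form representing $c_1((m-1)K_X)$ is the same object as one representing $c_1((m-1)K_{X/\Delta})$, and the notions of $\alpha$-psh function and of multiplier ideal sheaf are unchanged. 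The statement therefore transfers verbatim.

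There is no real obstacle. The one point to check is bookkeeping: that the Bott--Chern class of the given $\alpha$ matches the class $c_1(kK_{X/\Delta})$ required by Theorem \ref{thm: family mid}, so that the function $q$ with $\alpha=k\theta+\sqrt{-1}\partial\bar\partial q$ used in that proof exists. This holds by hypothesis. In the $K_X$ formulation it holds because the $dt$-trivialization identifies the two Chern classes.
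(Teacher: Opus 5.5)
Your proposal is correct and matches the paper's proof, which consists of the single step of applying Theorem \ref{thm: family mid} with $k=m-1$. Your discussion of the $dt$-trivialization correctly justifies the closing sentence about $(m-1)K_X$, which the paper leaves implicit.
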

\begin{proof}[Proof of Corollary \ref{cor: family mid}]
Apply Theorem \ref{thm: family mid} with $k=m-1$.
\end{proof}

\begin{rem}
Theorem \ref{thm: family mid} uses two features special to the relative
canonical class: P\u aun's total-space positivity
\eqref{eq:paun-total-positive} and the normalization identity
\eqref{eq:fiber-normalization-inv}. It does not imply the analogous
extension statement for an arbitrary nef line bundle and an arbitrary
submanifold.
\end{rem}

\section{Invariance of plurigenera for K\"ahler families}

\begin{thm}\label{thm: inv-Kah}
Let $p:X\rightarrow\Delta$ be a proper holomorphic submersion from a
K\"ahler manifold $X$, and assume that $K_{X_t}$ is nef for every
$t\in\Delta$. Then, for every integer $m\geq1$, every $t_0\in\Delta$, and
every $s\in H^0(X_{t_0},mK_{X_{t_0}})$, there exists
$\widetilde s\in H^0(X,mK_X)$ whose restriction to $X_{t_0}$, under the
identification induced by the standard coordinate on $\Delta$, is $s$.
Consequently, $P_m(X_t)$ is independent of $t\in\Delta$.
\end{thm}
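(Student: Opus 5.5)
The plan has three stages: move $t_0$ to the origin, extend $s$ over a shrunken family $p^{-1}(\Delta_{1-r})$ using the Ohsawa--Takegoshi theorem with the metric from Corollary \ref{cor: family mid}, and then pass to all of $X$ using constancy of $P_m$ and Grauert's direct image theorem.

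\emph{Reduction to $t_0=0$.} Compose $p$ with a M\"obius automorphism $g$ of $\Delta$ sending $t_0$ to $0$. The new family $g\circ p$ satisfies the same hypotheses. The trivializations of $K_\Delta$ by $dt$ and by $d(g(t))$ differ by the nowhere vanishing function $g'(t)$, so the extension problem is unchanged and we may assume $t_0=0$.

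\emph{Step 1: extension over $p^{-1}(\Delta_{1-r})$.} Fix $0<r<1$ and put $X':=p^{-1}(\Delta_{1-r})$. This is a weakly pseudoconvex K\"ahler manifold: $-\log(1-r-|t|)\circ p$ is a psh exhaustion because $p$ is proper.
\begin{itemize}
\item If $m=1$, apply Theorem \ref{thm: ext} on $X'$ with $L$ trivial, the function $t$ (so $r=1$ and $M=1$), and $f=s\wedge dt$. The $L^2$ condition holds because $X_0$ is compact.
\item If $m\geq2$, let $\alpha$ be the curvature of a smooth metric $h_0$ on $(m-1)K_X\simeq(m-1)K_{X/\Delta}$. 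Set $\varphi:=\frac{m-1}{m}\log|s|^2_{h_0^{m/(m-1)}}$, computed with the induced metric on $mK_{X_0}$. This $\varphi$ belongs to $\operatorname{PSH}(X_0,\alpha|_{X_0})$. Corollary \ref{cor: family mid} gives $\Phi\in\operatorname{PSH}(X',\alpha)$ with $\varphi\leq\Phi|_{X_0}+O(1)$.
\item Put $L:=(m-1)K_X$ with the singular metric $h:=h_0e^{-\Phi}$, so that $\sqrt{-1}\Theta_{L,h}\geq0$. With $f:=s\wedge dt\in H^0(X_0,(K_X\otimes L)|_{X_0})$, we get pointwise on $X_0$
\[
|f|^2_{h}\leq C|s|^2e^{-\varphi}=C|s|^{2/m},
\]
up to smooth factors. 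The right-hand side is bounded, hence integrable on the compact fiber.
\item Theorem \ref{thm: ext}, applied with the same data, yields $\widetilde s_r\in H^0(X',mK_X)$ whose restriction to $X_0$ is $s\wedge dt$.
\end{itemize}

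\emph{Step 2: constancy of $P_m$.} Step 1, combined with the reduction above, applies at every base point. Hence each section on any fiber $X_{t_1}$ extends to a neighbourhood of $X_{t_1}$. This gives $P_m(X_t)\geq P_m(X_{t_1})$ near $t_1$. Upper semicontinuity of $t\mapsto P_m(X_t)$ gives the reverse inequality, so $P_m$ is locally constant and therefore constant on $\Delta$. This proves the second assertion.

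\emph{Step 3: extension over all of $X$.} By Grauert's theorem, since $h^0(X_t,mK_{X_t})$ is constant, $\mathcal E:=p_*\mathscr O(mK_X)$ is locally free of rank $P_m$. Moreover, the base change map $\mathcal E\otimes\mathbb C(t)\to H^0(X_t,mK_{X_t})$ is an isomorphism for every $t$. A locally free sheaf on the disk is free, since $\Delta$ is Stein and contractible (Oka--Grauert). Hence
\[
H^0(X,mK_X)=H^0(\Delta,\mathcal E)\longrightarrow\mathcal E\otimes\mathbb C(0)\cong H^0(X_0,mK_{X_0})
\]
is surjective. This gives $\widetilde s\in H^0(X,mK_X)$ restricting to $s$ under the $dt$ identification.

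\emph{Main obstacle.} The real difficulty is the metric estimate in Step 1, and it is already handled by Corollary \ref{cor: family mid}. What remains is checking the details of the Ohsawa--Takegoshi application: the curvature sign of $h$, the bound $|t|\leq1$ on $X'$, and that $\Phi|_{X_0}\not\equiv-\infty$. The last point follows from $\varphi\leq\Phi|_{X_0}+O(1)$ together with $s\neq0$.
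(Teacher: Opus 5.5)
Your proposal is correct, and its analytic core (Step 1) is the paper's. You use the metric $h_0e^{-\Phi}$ from Corollary~\ref{cor: family mid}, the bound $|f|_h^2\le C|s|^{2/m}$ (the paper phrases this as $\mathcal I(\varphi_s)\subset\mathcal I(\Phi|_{X_0})$), and Ohsawa--Takegoshi on $p^{-1}(\Delta_{1-r})$.

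You differ from the paper in how you pass from the neighbourhood extension to a global one. You first prove constancy of $P_m$, from local extensions at every base point plus semicontinuity. You then invoke Grauert's continuity theorem: $p_*\mathscr O(mK_X)$ is locally free and commutes with base change, hence is free on $\Delta$, so its global sections surject onto $H^0(X_0,mK_{X_0})$.

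The paper globalizes before knowing anything about $P_m$. It uses only coherence of $\mathscr E_m=p_*(mK_X)$ and Cartan's Theorem A to write the germ of $\widetilde s_R$ as $\sum a_j(G_j)_0$ with global $G_j$. The section $\sum a_j(0)G_j$ differs from this germ by an element of $\mathfrak m_0(\mathscr E_m)_0$, which restricts to zero on $X_0$. Constancy of $P_m$ is deduced only afterwards, from the global extensions. The paper also treats $m=1$ by applying Ohsawa--Takegoshi directly on all of $X$.

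The paper's route is lighter: it needs no base change theorem. It also shows more generally that any section near $X_0$ can be replaced by a global one with the same trace on $X_0$. Your route costs Grauert's continuity theorem but yields extra structure: $p_*\mathscr O(mK_X)$ is locally free of rank $P_m$ and commutes with base change.

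A few small points to tidy:
\begin{itemize}
\item $-\log(1-r-|t|)$ is not smooth at $t=0$; use $-\log((1-r)^2-|t|^2)$ as the exhaustion.
\item For $m\ge2$ the section to extend is $s\otimes(dt)^{\otimes m}$, not $s\wedge dt$.
\item The claim that extensions of a basis remain linearly independent on nearby fibers needs the short argument the paper gives, via local sections of $p$ and a limiting argument.
\end{itemize}
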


\begin{proof}
We first argue componentwise. As in the first paragraph of the proof of
Theorem~\ref{thm: family mid}, every connected component of $X$ maps onto
$\Delta$, and only finitely many components occur. Both the extension
problem and the spaces $H^0(X_t,mK_{X_t})$ split as finite direct sums over
these open-and-closed components. We may therefore assume that $X$ is
connected. The Ehresmann--Stein-factorization argument used there then shows
that every fiber $X_t$ is connected.

It is enough to treat the fiber $X_0$. Indeed, for an arbitrary
$t_0\in\Delta$ we can replace $p$ by $\chi\circ p$, where $\chi$ is an
automorphism of $\Delta$ satisfying $\chi(t_0)=0$. The two fiber
identifications differ only by the nonzero scalar $(\chi'(t_0))^m$, which
can be removed by rescaling the resulting extension.

Let $t$ denote the standard coordinate on $\Delta$, set
$t_X:=t\circ p$, and write $dt_X:=p^*(dt)$. Since $dt$ trivializes
$K_\Delta$, we have
\[
 K_X\simeq K_{X/\Delta}\otimes p^*K_\Delta
       \simeq K_{X/\Delta}.
\]
On the central fiber, the adjunction isomorphism trivialized by the
conormal form $dt_X$ is
\[
 mK_{X_0}\xrightarrow{\sim}(mK_X)|_{X_0},
 \qquad u\longmapsto u\otimes(dt_X)^{\otimes m}.
\]

Suppose first that $m=1$. The function
\[
 \rho=-\log(1-|t_X|^2)
\]
is a smooth plurisubharmonic exhaustion of $X$: its sublevel sets are
compact because $p$ is proper. Thus $X$ is weakly pseudoconvex and carries
the given K\"ahler metric. The holomorphic function $t_X:X\to\mathbb C$
is bounded and $0$ is a regular value. Moreover, $X_0$ is compact and
$dt_X$ is nowhere zero along $X_0$. Consequently, for the trivial metric
on $L=\mathcal O_X$,
\[
 \int_{X_0}\frac{|s\otimes dt_X|_\omega^2}
 {|dt_X|_\omega^2}\,dV_{X_0,\omega}<+\infty.
\]
Thus $s\otimes dt_X\in H^0(X_0,K_X|_{X_0})$ satisfies the integrability
hypothesis of Theorem \ref{thm: ext}. Applying that theorem with the trivial
line bundle gives $G\in H^0(X,K_X)$ such that
$G|_{X_0}=s\otimes dt_X$. Under the preceding identification, $G$ is the
required extension of $s$.

Assume from now on that $m\geq2$. If $s=0$, there is nothing to prove,
so assume that $s\neq0$. Let $h_\omega$ be the smooth metric on
$K_{X/\Delta}$ induced by a K\"ahler form $\omega$ on $X$, and let
\[
 \alpha:=\Theta_{h_\omega^{m-1}}((m-1)K_{X/\Delta}).
\]
Define on $X_0$
\[
 \varphi_s:=\frac{m-1}{m}\log|s|^2_{h_\omega^m}.
\]
To check the curvature without fixing a normalization of the
Poincar\'e--Lelong formula, choose a local holomorphic frame $e$ of
$K_{X/\Delta}$, write $h_\omega(e,e)=e^{-\Psi}$ and
$s=f e^{\otimes m}$. Then
\[
 \varphi_s=\frac{m-1}{m}\log|f|^2-(m-1)\Psi
\]
and hence
\[
 \alpha|_{X_0}+\sqrt{-1}\partial\bar\partial\varphi_s
 =\frac{m-1}{m}\sqrt{-1}\partial\bar\partial\log|f|^2\geq0.
\]
Thus $\varphi_s$ is $\alpha|_{X_0}$-psh and has analytic singularities.
Moreover,
\[
 |f|^2e^{-\varphi_s}
 =|f|^{2/m}e^{(m-1)\Psi}\in L^1_{\mathrm{loc}},
\]
so $f\in\mathcal I(\varphi_s)$.

Fix $0<R<1$ and set
\[
 \Omega_R:=p^{-1}(\Delta_R).
\]
Theorem \ref{thm: family mid}, applied with $k=m-1$ and $r=1-R$,
gives an $\alpha$-psh function $\Phi$ on $\Omega_R$ such that
\[
 \mathcal I(\varphi_s)\subset\mathcal I(\Phi|_{X_0}).
\]
Set $L:=(m-1)K_{X/\Delta}$ and equip $L$ with the singular Hermitian
metric
\[
 h_L:=h_\omega^{m-1}e^{-\Phi}.
\]
Its curvature current is
\[
 \Theta_{h_L}(L)
 =\alpha+\sqrt{-1}\partial\bar\partial\Phi\geq0.
\]

We verify all hypotheses of Theorem \ref{thm: ext}. The function
\[
 \rho_R=-\log(R^2-|t_X|^2)
\]
is a smooth plurisubharmonic exhaustion of $\Omega_R$, since $p$ is
proper. Thus $\Omega_R$ is weakly pseudoconvex, and it is K\"ahler with
respect to $\omega|_{\Omega_R}$. The defining map is the bounded
holomorphic function $t_X:\Omega_R\to\mathbb C$, and $0$ is a regular
value. The section to be extended is
\[
 f_0:=s\otimes dt_X
 \in H^0\!\left(X_0,(K_{\Omega_R}\otimes L)|_{X_0}\right).
\]
In the local notation above, the only singular factor in its squared norm
is $|f|^2e^{-\Phi}$. Since
$f\in\mathcal I(\varphi_s)\subset\mathcal I(\Phi|_{X_0})$, this factor is
locally integrable. Since $X_0$ is compact and $dt_X$ is nowhere zero on
$X_0$, all remaining smooth factors, including
$|dt_X|_\omega^{-2}$, are uniformly bounded. Therefore
\[
 \int_{X_0}
 \frac{|f_0|_{h_L}^2}{|dt_X|_\omega^2}\,dV_{X_0,\omega}<+\infty.
\]
Theorem \ref{thm: ext} yields
\[
 G_R\in H^0(\Omega_R,K_{\Omega_R}\otimes L),
 \qquad G_R|_{X_0}=f_0.
\]
Using $K_X=K_{X/\Delta}\otimes p^*K_\Delta$, multiplication by the
nowhere-vanishing base form gives
\[
 \widetilde s_R:=G_R\otimes(dt_X)^{\otimes(m-1)}
 \in H^0(\Omega_R,mK_X).
\]
Its restriction is $s\otimes(dt_X)^{\otimes m}$, hence corresponds to
$s$ under the fixed identification.

It remains to pass from this neighborhood extension to a section on all of
$X$. By Grauert's proper direct-image theorem,
\[
 \mathscr E_m:=p_*(mK_X)
\]
is a coherent analytic sheaf on $\Delta$. The section $\widetilde s_R$
defines a germ $\sigma\in(\mathscr E_m)_0$. Since $\Delta$ is Stein,
Cartan's theorem A gives global sections
$G_1,\ldots,G_N\in H^0(\Delta,\mathscr E_m)=H^0(X,mK_X)$ whose germs
generate $(\mathscr E_m)_0$. Thus
\[
 \sigma=\sum_{j=1}^N a_j(G_j)_0
\]
for some germs $a_j\in\mathscr O_{\Delta,0}$. Set
\[
 \widetilde s:=\sum_{j=1}^N a_j(0)G_j\in H^0(X,mK_X).
\]
Then
\[
 \sigma-\widetilde s_0
 =\sum_{j=1}^N\bigl(a_j-a_j(0)\bigr)(G_j)_0
 \in\mathfrak m_0(\mathscr E_m)_0,
\]
The natural restriction morphism
\[
 (\mathscr E_m)_0\longrightarrow
 H^0\!\left(X_0,(mK_X)|_{X_0}\right)
\]
is $\mathscr O_{\Delta,0}$-linear when the target is viewed via evaluation
at $0$; hence it annihilates $\mathfrak m_0(\mathscr E_m)_0$. Therefore the
difference restricts to zero on $X_0$, and
$\widetilde s|_{X_0}$ corresponds to $s$.

We have now proved the extension assertion for both $m=1$ and $m\geq2$.
We finally prove constancy of the plurigenus. Fix $t_0\in\Delta$ and let
$s_1,\ldots,s_N$ be a basis of $H^0(X_{t_0},mK_{X_{t_0}})$. By the
extension statement just proved, they admit global extensions
$\widetilde s_1,\ldots,\widetilde s_N\in H^0(X,mK_X)$. Their restrictions
remain linearly independent for $t$ near $t_0$. Indeed, otherwise there
would exist $t_\nu\to t_0$ and vectors
$c^{(\nu)}=(c_1^{(\nu)},\ldots,c_N^{(\nu)})$ of norm one such that
\[
 \sum_{j=1}^N c_j^{(\nu)}\widetilde s_j|_{X_{t_\nu}}=0.
\]
After passing to a subsequence, $c^{(\nu)}\to c$ with $\|c\|=1$.
For each $x\in X_{t_0}$, choose a local holomorphic section of $p$ through
$x$ and a local frame of $mK_X$. Evaluating the preceding identities along
that section and passing to the limit gives
\[
 \sum_{j=1}^N c_j s_j(x)=0.
\]
Since $x$ is arbitrary, this contradicts the linear independence of the
$s_j$. Consequently
\[
 P_m(X_t)\geq P_m(X_{t_0})
\]
for $t$ near $t_0$. Since $p$ is a submersion, it is flat, and the locally
free sheaf $mK_{X/\Delta}$ is flat over $\Delta$. Grauert's upper
semicontinuity theorem therefore gives the reverse inequality near $t_0$.
Thus $P_m(X_t)$ is locally constant, and hence
constant because $\Delta$ is connected.
\end{proof}

	\end{document}